\newtheorem{theorem}{Theorem}
\newtheorem{lemma}[theorem]{Lemma}
\newtheorem{proposition}[theorem]{Proposition}
\newtheorem{corollary}[theorem]{Corollary}
\theoremstyle{definition}
\newtheorem{definition}[theorem]{Definition}
\newtheorem{remark}[theorem]{Remark}
\DeclareMathOperator{\innt}{int}
\DeclareMathOperator{\spec}{Spec}
\DeclareMathOperator{\Spec}{Spec}
\newcommand{\ind}[0]{\operatorname{index}}
\DeclareMathOperator{\conv}{conv}
\newcommand{\kG}{\Gamma}
\newcommand{\idm}{\mathfrak{m}}
\newcommand{\keps}{\varepsilon}
\newcommand{\A}{\mathbb A}
\newcommand{\N}{\mathbb N}
\newcommand{\T}{\mathbb T}
\newcommand{\V}{\mathbb V}
\newcommand{\Z}{\mathbb Z}
\newcommand{\Q}{\mathbb Q}
\newcommand{\CF}{{\mathcal F}}
\newcommand{\CG}{{\mathcal G}}
\newcommand{\CO}{{\mathcal O}}
\newcommand{\CR}{{\mathcal R}}
\definecolor{intOrange}{rgb}{1.0,.310,.0} 
\newcommand{\til}[1]{\widetilde{#1}}
\renewcommand{\iff}{\Leftrightarrow}
\newcommand{\then}{\Rightarrow}
\newcommand{\theen}{\Longrightarrow}
\newcommand{\neht}{\Leftarrow}
\newcommand{\lHom}{Hom}
\newcommand{\largestint}[1]{\lfloor #1 \rfloor}
\newcommand{\rounddown}[1]{\largestint{#1}}
\newcommand{\Sl}{\operatorname{SL}}
\newcommand{\spann}{\operatorname{span}}
\newcommand{\kst}{\,|\;}
\newcommand{\surj}{\rightarrow\hspace{-0.8em}\rightarrow}
\newcommand{\jrus}{\leftarrow\hspace{-0.8em}\leftarrow}
\newcommand{\kss}{\scriptscriptstyle}
\newcommand{\kbb}{{\kss \bullet}}
\newcommand{\ko}{\overline}
\newcommand{\dual}{^{\scriptscriptstyle\vee}}
\newcommand{\kk}{k}
\newcommand{\Matc}[2]{\left(\begin{array}{@{}*{#1}{c}@{}} #2
\end{array}\right)}
\newcommand{\siga}{\alpha}
\newcommand{\sigb}{\beta}
\newcommand{\sigc}{\gamma}
\newcommand{\sigr}{r^1}
\newcommand{\sigs}{r^e}
\newcommand{\Ia}{A}
\newcommand{\Ib}{B}
\newcommand{\ci}{\nu}  
\newcommand{\toric}{\T\V}  
\newcommand{\gH}{\operatorname{H}}
\newcommand{\Pic}{\operatorname{Pic}}
\newcommand{\kiso}[1]{\mbox{$(*)_#1$}}
\newcommand{\kISO}[1]{\mbox{$\ko{(*)}_#1$}}
\newcommand{\kr}{g}   
\newcommand{\km}{m}   
\newcommand{\kX}{S}  
\newcommand{\kA}{A}  
\newcommand{\kB}{B}  
\newcommand{\kra}{s}
\newcommand{\krb}{t}
\newcommand{\kT}{{\T}}
\newcommand{\z}[0]{{\mathbb Z}}
\newcommand{\res}[0]{\operatorname{\mathcal R}}
\newcommand{\qg}{\operatorname{qG}}
\newcommand{\qG}{\mbox{qG}}
\newcommand{\VW}{\mbox{V\!W}}
\newcommand{\vw}{\operatorname{V\!W}}
\renewcommand{\a}[0]{{\mathbb A}}
\newcommand{\embdim}[0]{\operatorname{embdim}}
\newcommand{\chr}[0]{\operatorname{char}}
\newcommand{\TT}{{T^1}}
\newcommand{\TTqG}{{T^1_{\qg}}}
\newcommand{\TTV}{{T^1_V}}
\newcommand{\TTW}{{T^1_W}}
\newcommand{\TTVW}{{T^1_{\vw}}}
\begin{document}

\title[The dualizing sheaf on deformations of   
toric surface singularities]
{The dualizing sheaf on first-order deformations of 
toric surface singularities}

\author[K.~Altmann]{Klaus Altmann
}
\address{Institut f\"ur Mathematik,
FU Berlin,
Arnimalle 3,
14195 Berlin,
Germany}
\email{altmann@math.fu-berlin.de}
\author[J.~Koll\'{a}r]{J\'{a}nos~Koll\'{a}r}
\address{Department of Mathematics,
Princeton University,
Fine Hall, 
Washington Road Princeton, NJ 08544-1000, USA}
\email{kollar@math.princeton.edu}
\thanks{Partial financial support to KA was provided by the 
DFG via the CRC 647 and to
JK
by the NSF under grant number DMS-1362960.}

\begin{abstract}
We explicitly describe infinitesimal deformations of cyclic quotient
singularities that satisfy one of the deformation conditions introduced by
Wahl,
Koll\'ar--Shepherd-Barron  and Viehweg. The conclusion is that in many
cases
these three notions are different from each other. In particular, we see that
while the KSB and the Viehweg versions of the moduli space of surfaces of
general type have the same underlying reduced subscheme, their
infinitesimal structures are different.
\end{abstract}

\maketitle

\section{Introduction}
\label{intro}

In order to compactify the moduli space of surfaces of general type,
one has to consider singular surfaces but for a long time it was not clear 
which class of singularities should be allowed. Building on Mori's 
program,
\cite{KSB} described such a  class, named
semi-log-canonical singularities.
These include quotient singularities, cusps and a few others;
see \cite[Sec.2.2]{kk-singbook} for a complete list.

A new feature of the theory is  that not every flat deformation of a 
surface
with such singularities should be allowed in moduli theory.
In essence this observation can be traced back to Bertini
who observed that the cone over the degree 4 rational normal curve admits 
two distinct smoothings. One is the Veronese surface 
the other is a ruled surface;
see \cite{pin-def}.
For the Veronese the self-intersection of the canonical class is 9 for the 
ruled surface it is 8. Since we would like the basic numerical invariants 
to be locally constant in families, one of these deformations should not 
be allowed.

It is not obvious how to obtain the right class of deformations.
  Three variants  have been
investigated in the past. Their common feature is that they all study
the compatibility of  deformations  with
powers of the dualizing sheaf $\omega$. 
In order to define these 3 versions, we need some definitions.

\subsection{General setup}
\label{QG.V.defn}
We are ultimately interested in schemes with semi-log-ca\-no\-ni\-cal
singularities $\kX$, but for the basic definitions we need to assume only 
that
$\kX$ is a pure dimensional
$S_2$ scheme over a field $k$ such  that
\begin{enumerate}
\item[(i)] there is a closed subset $Z\subset\kX$ of codimension $\geq 2$
such that $\omega_{\kX\setminus Z}$ is locally free and
\item[(ii)] there is an $\km>0$ such that
$\omega_\kX^{[\km]}$ is locally free,
\end{enumerate}
where $\omega_\kX^{[\km]}$  denotes the reflexive hull of
$\omega_\kX^{\otimes \km}$.
The smallest such $\km>0$ is called the {\it index} of $\omega_\kX$.
(Both of these conditions are satisfied by schemes with semi-log-canonical
singularities.)
\\[1ex]
Let $(0,T)$ be a local scheme such that $k(0)\cong k$ and
$p:X_T\to T$ a flat deformation of $\kX\cong X_0$.
For every $\kr\in \z$ we have natural restriction maps
$$
\res^{[\kr]}:\omega_{X_T/T}^{[\kr]}|_{X_0}\to \omega_{X_0}^{[\kr]}.
$$
These maps are isomorphisms over $\kX\setminus Z$ and we are interested in
understanding those cases when they are isomorphisms over $\kX$.
The local criterion of flatness shows  (see \cite{k-modbook} for details)
that if $T$ is Artinian then
$$
\res^{[\kr]} \mbox{ is an isomorphism }
\iff
\res^{[\kr]} \mbox{ is surjective }
\iff
\omega_{X_T/T}^{[\kr]} \mbox{ is flat over } T.
$$
We will denote this condition by $\kiso{\kr}$ (with $\kr\in\Z$).

\subsection{Definitions of \qG- and V- and VW-deformations}
\label{QG.V.defn.2}
Let $p:X_T\to T$  be a flat deformation as in (\ref{QG.V.defn}).

\subsubsection{$\qg$-deformations}
\label{def-qG}
We call $p:X_T\to T$ a  {\it $\qg$-deformation} if the
conditions
\kiso{\kr} defined in (\ref{QG.V.defn})
hold  for every $\kr\in\Z$.
It is enough to check these for  $\kr=1,\dots, \ind(\omega_\kX)$.
($\qg$ is short for ``Quotient of Gorenstein,'' but this is misleading if
$\dim \kX\geq 3$.)
\\[1ex]
These deformations were introduced and studied by
Koll\'ar and Shepherd-Barron \cite{KSB} as the class most suitable for
compactifying the
moduli of varieties of general type. A list of log canonical surface
singularities with $\qg$-smoothings is given in \cite{KSB}.
In the key case of cyclic quotient singularities the list 
(of the so-called T-singularities) was
earlier established by Wahl \cite[2.7]{MR82d:14004}, though he viewed them 
as
examples of W-deformations (see below).

\subsubsection{V-deformations}
\label{def-V}
We call $p:X_T\to T$ a   {\it Viehweg-type deformation}
(or V-deformation) if  the conditions
\kiso{\kr} from (\ref{QG.V.defn})
hold for every $\kr$ divisible by
$\ind(\omega_\kX)$. It is enough to check this for $\kr=\ind(\omega_\kX)$.
\\[1ex]
These deformations form the natural class suggested by the geometric 
invariant theory methods  used in the monograph \cite{vieh-book}.
Actually,
\cite{vieh-book} considers the---a priori
weaker---condition: $\res^{[\kr]}$ is an isomorphism
for some $\kr>0$ divisible by
$\ind(\omega_\kX)$. One can see that in this case
\kiso{\kr}
holds for every $\kr$ divisible by $\ind(\omega_\kX)$, at least
in characteristic 0; see \cite{k-modbook}.
V-deformations are problematic in
positive characteristic, see \cite[14.7]{hac-kov}.

\subsubsection{W-deformations}
\label{def-W}
We call $p:X_T\to T$ a   {\it Wahl-type deformation} (or
W-deformation) if the condition
\kiso{\kr}
holds for $\kr=-1$.
These deformations were considered in \cite{MR82d:14004, MR83h:14029}
and called $\omega^*$-constant deformations there.

\subsubsection{VW-deformations}
\label{def-VW}
We call $p:X_T\to T$ a VW-deformation if
it is both a V-deformation and a W-deformation.

\subsection{Relations between \qG, V and VW}
\label{rel-qG-VW}
It is clear that every $\qg$-deformation is also a VW-deformation.
Understanding the  precise relationship between
the four classes (\ref{def-qG}) -- (\ref{def-VW})
has been a long standing open problem.
For reduced base spaces we have the following very strong result.

\begin{theorem}
\label{th-VqG}
A flat deformation of a log canonical scheme over  a reduced,
local scheme of characteristic 0 is a
  V-deformation  if and only if it  is a $\qg$-deformation.
\end{theorem}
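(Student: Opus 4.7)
The direction $\qg \Rightarrow $ V is immediate from the definitions (conditions $\kiso{\kr}$ for all $\kr$ in particular hold for $\kr$ divisible by $\ind(\omega_\kX)$). So the task is to prove that a V-deformation $p:X_T\to T$ of $\kX$ over a reduced local base satisfies $\kiso{\kr}$ for every $\kr\in\Z$; by (\ref{def-qG}) it suffices to do this for $1\le \kr \le \km-1$, where $\km=\ind(\omega_\kX)$. The natural tool for producing these ``missing'' flatness conditions is the canonical index-$\km$ cover, and the proof I would give is built around lifting this cover to $X_T$.

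Working locally on $\kX$, I would first assume $\kX=\Spec A$ is local and $\omega_\kX^{[\km]}\cong \cO_\kX$. Choose such a trivialisation; the associated $\mu_\km$-cover $\pi:\tX\to \kX$ is finite, quasi-\'etale, Gorenstein and log canonical, and in characteristic $0$ satisfies $\kX=\tX/\mu_\km$. Since $p$ is a V-deformation, the sheaf $\omega_{X_T/T}^{[\km]}$ is a line bundle on $X_T$ whose restriction to $X_0$ is the trivial bundle $\omega_\kX^{[\km]}$; as $X_T$ is local, it is itself trivial, and I would pick a trivialisation lifting the chosen one downstairs. Using the same recipe as in the closed fibre (over the Gorenstein locus $X_T\setminus Z_T$, take the $\mu_\km$-torsor associated with $\omega_{X_T/T}$ and its $\km$-th power trivialisation; globally, form the relative $S_2$-hull), this produces a candidate $\pi_T:\tX_T\to X_T$.

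The decisive step is to show that $\pi_T:\tX_T\to X_T$ is a \emph{flat} deformation of $\pi:\tX\to \kX$, and this is exactly where reducedness of $T$ enters. Over a reduced local base I would reduce, via normalising each irreducible component of $T$ and localising at height-one primes, to the case where $T$ is the spectrum of a DVR; there the $S_2$-hull commutes with the base change $0\hookrightarrow T$ because flatness and purity of associated primes force no embedded components to appear in the special fibre. (It is precisely the failure of this $S_2$-ification-vs-base-change compatibility over non-reduced bases that will power the counterexamples in the rest of the paper.) Once $\tX_T$ is a flat deformation of the \emph{Gorenstein} scheme $\tX$, the sheaf $\omega_{\tX_T/T}$ is automatically a line bundle (a reflexive sheaf on a flat family, locally free on the closed fibre by Nakayama), so every $\kiso{\kr}$ holds trivially on $\tX_T$.

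Finally I would descend. Since $\pi_T$ is finite flat of degree $\km$ with Galois group $\mu_\km$, and $|\mu_\km|$ is invertible in characteristic $0$, taking $\mu_\km$-invariants is exact and commutes with restriction to $X_0$; the eigenspace decomposition
\[
(\pi_T)_*\,\omega_{\tX_T/T}^{[\kr]} \;=\; \bigoplus_{i=0}^{\km-1} \omega_{X_T/T}^{[\kr+i]}\otimes L^{-i}
\]
(for the appropriate twisting line bundle $L$ coming from the trivialisation) then transports flatness and the isomorphism $\res^{[\kr]}$ from $\tX_T$ to $X_T$ for every $\kr\in\Z$, giving the $\qg$-condition. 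The main obstacle, and the only place where the proof is more than bookkeeping, is the middle step: the reducedness of $T$ has to be used to guarantee that the $S_2$-ified $\mu_\km$-cover $\tX_T$ is $T$-flat with the expected special fibre.
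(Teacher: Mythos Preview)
The paper does not prove Theorem~\ref{th-VqG} in the text; the paragraph following the statement records it as a consequence of external results---the surface/DVR case from \cite{MR541025} and \cite{k-flat}, the log terminal case via inversion of adjunction \cite{k-etal}, the log canonical case via Kawakita \cite{kawakita} together with normality of log canonical centers \cite{ambro, fujinobook}, and the passage from DVRs to arbitrary reduced bases via the hulls--and--husks machinery \cite{k-hh}---with full details deferred to \cite{k-modbook}. So the relevant comparison is between your index-one-cover strategy and the inversion-of-adjunction route the paper invokes; these are genuinely different lines of attack.

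Your sketch has a real gap at the step you yourself flag, but the difficulty is sharper than your justification suggests. Over a DVR you assert that the $S_2$-hull $\tX_T$ has special fibre $\tX$ because ``flatness and purity of associated primes force no embedded components.'' Absence of embedded components is only $S_1$; identifying $(\tX_T)_0$ with $\tX$ needs $(\tX_T)_0$ to be $S_2$, and the depth drop along the Cartier divisor $X_0\subset X_T$ yields at best $S_1$ from the $S_2$-ness of $j_*\cO_{\tU_T}$. Unwinding the eigenspace decomposition, the assertion $(\tX_T)_0=\tX$ is \emph{equivalent} to the isomorphy of all the restriction maps $\res^{[-i]}$ for $0\le i\le \km-1$, i.e.\ to the $\qg$-condition itself, so as written the argument is circular. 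What breaks the circle in the paper's cited approach is inversion of adjunction: the V-condition makes $K_{X_T}+X_0$ $\Q$-Cartier, and then \cite{k-etal, kawakita} force $(X_T,X_0)$ to be log canonical near $X_0$, which supplies exactly the missing depth control. Your outline uses the log canonical hypothesis on $\kX$ only to say that $\tX$ is Gorenstein; in dimension $\geq 3$ the index-one cover of a strictly log canonical singularity need not be Cohen--Macaulay, and without CM the depth bookkeeping you rely on fails. Likewise, the reduction from a general reduced base to DVRs by normalisation and localisation is precisely the content of \cite{k-hh} rather than a formality.
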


When $T$ is the spectrum of a DVR,  $\dim \kX=2$
and $\kX$ has quotient singularities, this
was proved in \cite{MR541025} and \cite[14.2]{k-flat}.
If $\dim \kX>2$ and $\kX$ has log terminal singularities,
this is a special case of
inversion of adjunction as proved in \cite[Sec.17]{k-etal} and
the log canonical case similarly follows from
\cite{kawakita} and the normality of log canonical centers
\cite{ambro, fujinobook};
see also \cite[Sec.4.3]{kk-singbook}.
These imply the claim for arbitrary reduced base schemes using
\cite{k-hh}; see \cite{k-modbook} for more details.
\\[1ex]
This raised the possibility that every  V-deformation
of a log-canonical singularity is also a $\qg$-deformation over arbitrary 
base
schemes.
It would be enough to check this for Artinian bases. In this note we focus 
on
first order deformations and prove that these two classes are quite 
different
from each other.

\begin{definition}
\label{def-TqGVW}
Let $\kX$ be a scheme satisfying  the conditions
(\ref{QG.V.defn})(i)-(ii).
Let $\TT(\kX)$ be the (possibly infinite dimensional) $k$-vector space
of deformations of $\kX$ over $\spec k[\epsilon]$.
We denote by $\TTqG(\kX)\subset \TT(\kX)$ the space of first order
$\qg$-deformations, $\TTV(\kX)$  the space of first order
V-deformations, $\TTW(\kX)$  the space of first order
W-deformations, and  $\TTVW(\kX)$  the space of first order
VW-deformations.
\end{definition}

We have obvious inclusions
$$
\TTqG(\kX)\;\subset\; \TTVW(\kX)\;\subset\;
\TTV(\kX), \TTW(\kX)\;\subset\; T^1(\kX),
$$
but the relationship between $\TTV(\kX)$ and $ \TTW(\kX) $
is not clear.

\subsection{The case of cyclic quotient singularities}
\label{resultCQS}
We completely describe first order
V-, \VW- and $\qg$-deformations of two-dimensional
cyclic quotient singularities.
The precise answers are stated
in Sections \ref{V-deformations} and \ref{compJanos}.
The main
conclusion is that V-deformations and \VW-deformations, and even more
V-deformations and \qG-deformations are quite different
over Artinian bases.

\begin{theorem}
\label{VW-GQ.thm.1}
Let $S_{n,q}:=\a^2/\tfrac1{n}(1,q) $ denote the
quotient of $\a^2$ by the cyclic group action generated by
$(x,y)\mapsto (\eta x, \eta^q y)$,
where $\eta$ is a primitive $n$th root of unity. Then,
if $\,q\neq -1$ in $(\Z/n\Z)^*$,
i.e.\
if $\,\embdim\bigl(S_{n,q}\bigr)\geq 4$,
$$
\dim \TTV\bigl(S_{n,q}\bigr)- \dim \TTVW\bigl(S_{n,q}\bigr)=
\embdim\bigl(S_{n,q}\bigr) - 4
\hspace{0.5em}\mbox{or}\hspace{0.5em}
\embdim\bigl(S_{n,q}\bigr) - 5.
$$
In particular, if $\embdim\bigl(S_{n,q}\bigr)\geq 6$ then $S_{n,q}$ has
V-deformations that are not $\vw$-deformations, hence also not
$\qg$-deformations.
\end{theorem}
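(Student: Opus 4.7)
The plan is to combine the explicit descriptions of $\TTV(S_{n,q})$ and $\TTVW(S_{n,q})$ that will be established in Sections~\ref{V-deformations} and~\ref{compJanos}, compute the difference of their dimensions, and compare the result with the embedding dimension.

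First, I would exploit the toric structure. Write $S_{n,q}$ as the affine toric variety associated to the cone $\sigma\subset N_{\bR}$ generated by $(1,0)$ and $(-q,n)$, with dual lattice $M$. Then $T^1(S_{n,q})$ carries a natural $M$-grading, and each of $\TTqG$, $\TTW$, $\TTV$, $\TTVW$ is a graded subspace; the problem therefore reduces to a degree-by-degree analysis. By the classical combinatorial description of $T^1$ for cyclic quotient surface singularities (Pinkham, Arndt, Christophersen--Stevens, Altmann), each graded piece $T^1(S_{n,q})_{-R}$ is controlled by the lattice geometry of a polygon determined by $R$, and the conditions $(*)_r$ for $r=-1$ (W) and $r=\ind(\omega_{S_{n,q}})$ (V) translate into restrictions on which degrees $R$ survive and with what multiplicities.

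Second, I would carry out the subtraction. From the previous sections the two dimensions are both explicit functions of the Hirzebruch--Jung continued fraction $n/q=[b_1,\ldots,b_e]$ (so that $\embdim(S_{n,q})=e+2$), and it suffices to show that their difference equals $e-2$ or $e-3$. My expectation is that the extra V-degrees---those that pass the V-test but fail the W-test---are in bijection with a subset of the Hilbert basis of $\sigma^\vee\cap M$ from which a small number of extremal directions (the two \emph{corner} generators of $\sigma^\vee$ together with two or three \emph{canonical}-type directions in their vicinity) have been removed; counting these lattice points gives the claimed formula.

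The main combinatorial obstacle, and the source of the $-4$ versus $-5$ dichotomy, is an arithmetic case distinction: depending on whether a particular lattice point lies on a compact or on a non-compact edge of the relevant Newton polygon (equivalently, on whether the first or last continued-fraction entry equals $2$, or on the residue of $q$ modulo $n$), one extremal direction either remains in the $\TTV\setminus\TTVW$ count or collapses. Carrying out this bookkeeping cleanly is where I expect the real work to lie. Once the formula is in place, the final assertion of the theorem is automatic: for $\embdim(S_{n,q})\geq 6$ the difference is at least $1$, hence $\TTVW(S_{n,q})\subsetneq\TTV(S_{n,q})$, and the inclusion $\TTqG\subset\TTVW$ then produces first order V-deformations that are neither VW- nor $\qg$-deformations.
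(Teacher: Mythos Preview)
Your overall plan---grade $T^1$ by $M$, compute $\TTV$ and $\TTVW$ degree by degree using the later sections, and subtract---is exactly what the paper does: the theorem is derived there as a direct consequence of the more detailed Theorem~\ref{th-IAB}, and that theorem is precisely the degree-by-degree count carried out in Sections~\ref{V-deformations} and~\ref{compJanos}.

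Two of your specific expectations are off, though, and would send you down the wrong track if you followed them. First, the $-4$ versus $-5$ dichotomy has nothing to do with whether the first or last continued-fraction entry equals~$2$. In the paper the governing notion is whether the singularity is \emph{grounded}, i.e.\ whether the central degree $\overline{R}$ (the primitive generator of $\Q_{\geq 0}(r^1+r^e)$) lies in the Hilbert basis~$E$; in the grounded case the remaining ambiguity comes from a single ``last deformation'' in degree $-(a_\nu-1)\overline{R}$, whose VW-status depends on the fractional parts $\{A\},\{B\}$ of the interval endpoints (Theorem~\ref{th-lastDef}), not on $a_2$ or $a_{e-1}$. Second, the V-but-not-VW degrees are not literally a subset of the Hilbert basis: in the grounded case they are the non-central $r^i$ for $3\le i\le e-2$ (contributing $e-5$) together, possibly, with the \emph{multiple} $(a_\nu-1)\overline{R}$ of the central degree. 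The paper's device for making this bookkeeping clean is the interval description $\sigma=C(I)$ with $I=[-A,B]$, in which all the relevant quantities become $\lfloor A\rfloor$, $\lfloor B\rfloor$, $\{A\}$, $\{B\}$; without an equivalent normalization your continued-fraction approach would work in principle but be considerably messier.
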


This is a direct consequence of the more detailed Theorem \ref{th-IAB}.
By contrast, $\qg$-deformations and VW-deformations are quite close to 
each
other, as shown by the next result. This will be proved in 
(\ref{proofIntro}).

\begin{theorem}
\label{V-VW.thm.1}
Let $S_{n,q}$ be as in the previous theorem. Then
\begin{enumerate}
\item If $\gcd(n, q+1)=1$ then
$\TTqG\bigl(S_{n,q}\bigr)=\TTVW\bigl(S_{n,q}\bigr)=\{0\}$.
\item  
If $S_{n,q}$ admits a $\qg$-smoothing then
$\TTqG\bigl(S_{n,q}\bigr)=\TTVW\bigl(S_{n,q}\bigr)$.
\item In general  $\,\dim \TTqG\bigl(S_{n,q}\bigr)\leq \dim
\TTVW\bigl(S_{n,q}\bigr)\leq \dim \TTqG\bigl(S_{n,q}\bigr)+1$.
\end{enumerate}
\end{theorem}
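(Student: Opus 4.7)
The plan is to exploit the torus-equivariant decomposition $\TT(S_{n,q}) = \bigoplus_c \TT(S_{n,q})_c$ by characters $c$ of the acting cyclic group $\mu_n$, which is used in Sections \ref{V-deformations} and \ref{compJanos} to give explicit formulas for $\TTV$ and $\TTVW$. Under this grading, each condition $\kiso{r}$ translates into a linear condition on graded pieces determined by $r \pmod n$. In particular, $\TTVW(S_{n,q})$ is cut out by the conditions $\kiso{r}$ for $r$ in the subgroup $H := \langle -1, \ind(\omega_{S_{n,q}}) \rangle \subset \Z/n\Z$, while $\TTqG(S_{n,q})$ is cut out by the same conditions for $r$ ranging over all of $\Z/n\Z$. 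This reduces the three statements to a combinatorial comparison of the resulting subspaces.

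For part (1), the assumption $\gcd(n, q+1) = 1$ gives $\ind(\omega_{S_{n,q}}) = n$, so that $H = \langle -1 \rangle$. A direct character-by-character check using the formulas of Section \ref{compJanos} shows that the W-condition alone kills every graded piece when $q+1$ is a unit modulo $n$, so already $\TTW(S_{n,q}) = 0$ and therefore $\TTVW(S_{n,q}) = \TTqG(S_{n,q}) = 0$. For part (2), if $S_{n,q}$ admits a $\qg$-smoothing then by the classification of \cite{KSB, MR82d:14004} it is a T-singularity of the form $\tfrac{1}{dn^2}(1, dna-1)$; the continued fraction of $n/q$ then has the special T-chain shape, and plugging this shape into the formulas shows that the extra conditions indexed by $r \notin H$ are already implied by the V- and W-conditions, yielding $\TTqG(S_{n,q}) = \TTVW(S_{n,q})$.

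The main technical point is part (3), bounding $\dim \TTVW - \dim \TTqG$ by $1$ for an arbitrary $S_{n,q}$. My approach would be to single out one distinguished character $c_0 \in \Z/n\Z \setminus H$ using the continued fraction of $n/q$: I would argue from the graded formula for $\TT(S_{n,q})$ that at most one $c \notin H$ can support a nontrivial contribution to $\TTVW/\TTqG$, and that on this $c_0$-piece the additional qG constraint cuts out at most a codimension-one subspace. The hardest step is precisely this combinatorial bookkeeping---verifying that exactly one character outside $H$ carries a potential obstruction, and that the corresponding linear form has rank at most one---since it requires a careful inspection of which $c$-graded pieces of $\TT(S_{n,q})$ are nonzero in terms of the continued fraction expansion of $n/q$.
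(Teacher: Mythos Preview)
Your framework has a basic error: in $\Z/n\Z$ the element $-1$ already generates the whole group, so your ``subgroup'' $H=\langle -1,\ind(\omega_S)\rangle$ is always all of $\Z/n\Z$, which would make $\TTVW=\TTqG$ trivially and contradict the examples in the paper. The set of conditions defining VW is $\{\kiso{r}:r\in m\Z\}\cup\{\kiso{-1}\}$ (with $m=\ind(\omega_S)$), and this index set is simply not a subgroup. More to the point, the paper shows in (\ref{shiftZones}) that once the V-condition holds the $\kiso{r}$ depend only on $r\bmod m$, so the relevant index set is $\Z/m\Z$, not $\Z/n\Z$; qG corresponds to all of $\Z/m\Z$ while VW corresponds to the two residues $0$ and $-1$.

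Beyond this, your plan for part~(3) is not a proof but a restatement of what must be shown: you assert that ``at most one $c\notin H$ can support a nontrivial contribution to $\TTVW/\TTqG$'' and concede this is ``the hardest step'' requiring unspecified bookkeeping. The paper obtains this structural fact as a consequence of the interval description in Section~\ref{fiveCQS} and Proposition~\ref{prop-WqG}. The analysis in (\ref{scanII})--(\ref{scanIII}) shows that both $\TTqG$ and $\TTVW$ are supported only on the multiples $-k\overline{R}$ of the central degree with $1\le k\le a_\nu-1$, and Theorem~\ref{th-lastDef} pins down that they agree for $k\le a_\nu-2$ and can differ only at the single ``last'' value $k=a_\nu-1$, where each space is either $0$ or the line $\overline{R}^\perp$. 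Part~(3) is then immediate. Part~(1) follows because $\gcd(n,q+1)=1$ forces $|I|=1/m<1$, so the interval is not grounded and both spaces vanish by (\ref{scanII}); part~(2) follows because T-singularities are exactly those with $|I|\in\Z$, i.e.\ $\{A\}+\{B\}=1$, making the last deformation qG by Theorem~\ref{th-lastDef}(1). Your sketches for (1) and (2) (``a direct character-by-character check'', ``plugging this shape into the formulas'') do not supply these arguments.
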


\subsection{Using the interval language}
\label{usingInt}
Besides the description of cyclic quotient singularities
in terms of the invariants $n$ and $q$,
there is an alternative possibility by using rational intervals
$I=[-\Ia,\Ib]\subseteq \Q$ with uniform denominators at the end points,
i.e.\ $\Ia$ and $\Ib$ have the same denominator in reduced form.
We call $I$ or the resulting singularity $S_I$ {\em grounded} if
$I$ contains an integer in its interior.
Since integral shifts of $I$ will be neglected, this leads to
$\Ia,\Ib>0$.
See (\ref{coneInterval}) and (\ref{groundedCones}) for details.
This language allows a much more detailed description
of the situation:

\begin{theorem}
\label{th-IAB}
Assume that
$\,\embdim\bigl(S_I\bigr)\geq 4$. Then
\\[0.5ex]
{\bf 1)} If the interval $I$ is not grounded, then the associated
surface singularity $S_I$
has neither \qG- nor \VW-deformations. The dimension of
$\TTV(S_I)$ is $\embdim(S_I)-4$.
\\[0.5ex]
{\bf 2)} If $\Ia,\Ib>0$, then
$\dim \TTV(S_I)=\embdim(S_I)-4 + \rounddown{\Ia}+\rounddown{\Ib}$.
\\[0.5ex]
{\bf 3)} If $\Ia,\Ib>0$ with fractional parts $\{\Ia\}=\frac{1}{\km}$
or $\{\Ib\}=\frac{1}{\km}$, then
$$
\dim \TTVW(S_I)=\dim \TTqG(S_I)=\rounddown{\Ia+\Ib}.
$$
{\bf 4)} If $\Ia,\Ib>0$ with both fractional parts $\{\Ia\}$ and $\{\Ib\}$
different from $\frac{1}{\km}$, then
$$
\dim \TTVW(S_I)=\rounddown{\Ia}+\rounddown{\Ib}+1
\hspace{0.8em}\mbox{and}\hspace{0.8em}
\dim \TTqG(S_I)=\rounddown{\Ia+\Ib}.
$$
\end{theorem}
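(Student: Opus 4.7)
The plan is to reduce the four statements to an explicit combinatorial calculation on the interval $I=[-\Ia,\Ib]$, using the toric description of $S_I$ together with Altmann's $M$-graded decomposition of $\TT$ for two-dimensional toric singularities.

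First, I would fix the toric setup. Realize $S_I$ as the affine toric surface associated to a two-dimensional cone $\sigma\subset N_\R$ whose primitive ray generators are the integer points read off from the endpoints of $I$; then the index $\km$ of $\omega_{S_I}$ equals the common reduced denominator of $\Ia$ and $\Ib$, and $\embdim(S_I)$ is computed from the Hirzebruch--Jung continued fraction attached to $I$. The condition ``$I$ is grounded'' means an integer lies in the interior of $I$, i.e.\ $\Ia,\Ib>0$ up to integer translation. The arithmetic condition $\{\Ia\}=\tfrac{1}{\km}$ or $\{\Ib\}=\tfrac{1}{\km}$ picks out precisely the cases where one end of $\sigma$ looks like the corresponding end of a T-singularity, which is why it controls the $\qg$-count.

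Second, I would invoke Altmann's decomposition
$$\TT(S_I)\;=\;\bigoplus_{\mathbf{R}\in M}\TT(S_I)_{-\mathbf{R}},$$
which for a two-dimensional $\sigma$ reduces to counting lattice segments inside the slice $\sigma^\vee\cap[\mathbf{R}=1]$ and its Minkowski summands. This gives a degree-by-degree, combinatorial model of $\TT(S_I)$ indexed by $M$, and summing contributions reproduces $\dim\TT(S_I)$.

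Third, I would identify the V-, W-, \qG-, \VW-subspaces inside this graded decomposition. A first-order deformation in degree $-\mathbf{R}$ is a V-deformation iff the restriction map for $\omega^{[\km]}$ remains surjective, a condition translating to $\km\mathbf{R}$ lying in the lattice $M$; it is a W-deformation iff the analogous compatibility for $\omega^{[-1]}$ holds, which reads off the position of $-\mathbf{R}$ relative to the anticanonical degree; and it is a \qG-deformation iff the compatibilities hold for every intermediate $r$ between $1$ and $\km$. These degree-wise criteria should be derived by tracing, on each graded slice, the Kodaira--Spencer comparison between $\omega_{X_T/T}^{[r]}|_{X_0}$ and $\omega_{X_0}^{[r]}$ through the toric presentation of the reflexive hulls.

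Finally, I would carry out the four bookkeeping computations. For (1), when $I$ is ungrounded every admissible V-degree lies at a ``boundary'' position where the W-compatibility already fails, giving $\TTVW=\TTqG=0$ while $\dim\TTV=\embdim-4$. For (2), each extra unit of $\rounddown{\Ia}$ and $\rounddown{\Ib}$ contributes one further V-admissible degree beyond the basic boundary contribution, producing $\embdim-4+\rounddown{\Ia}+\rounddown{\Ib}$. For (3) and (4) I would count V-degrees that also satisfy the W- and \qG-conditions: the \qG-count always equals $\rounddown{\Ia+\Ib}$, corresponding to the lattice points of $\sigma^\vee$ in a specific strip, while the W-condition is strictly weaker and produces one extra admissible degree exactly when neither fractional part equals $1/\km$, giving the dichotomy between cases (3) and (4). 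I expect the main obstacle to be case (4): one must single out the extra W-compatible, non-\qG\ degree and confirm that its existence is obstructed precisely by $\{\Ia\}=1/\km$ or $\{\Ib\}=1/\km$, a subtlety that comes from the T-singularity arithmetic of the endpoints.
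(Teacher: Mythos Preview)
Your overall strategy---graded decomposition of $\TT$ and a degree-by-degree analysis of which homogeneous pieces survive the V-, W- and \qG-constraints---is exactly the paper's strategy, but the criteria you write down are not the correct ones and miss the key object. The V-condition is \emph{not} ``$\km\mathbf{R}\in M$'' (every $\mathbf{R}$ already lies in $M$, so this is vacuous); the paper shows instead (via an explicit \v{C}ech computation of the gluing map $\varphi_\xi^{[\kr]}$ for $\omega_{X\setminus 0}^{[\kr]}$) that $\xi=x^{-R}\partial_a\in\TT(-R)$ satisfies $\kiso{\kr}$ iff the lattice points of a certain parallelogram zone $Z_{R,\kr}=\frac{\kr}{\km}\ko{R}+Z_R$ lie in $a^\perp+\kr R$. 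The central degree $\ko{R}$ (the primitive generator of $\Q_{\geq 0}(\sigr+\sigs)$) is what makes everything work: combining $\kiso{0}$ with $\kiso{\km}$ forces exactly $a\in(\ko{R}-\km R)^\perp$, and that single linear condition is the V-criterion. Your Minkowski-summand remark refers to a different Altmann formalism and does not produce this.

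For parts (3) and (4) the missing idea is that \qG\ versus \VW\ is detected by whether the zone $Z_R$ meets the \emph{extended} lattice $\til{M}=M+\Z\cdot\frac{1}{\km}\ko{R}$ or only the single coset $M+\frac{1}{\km}\ko{R}$ outside the line $\Q\cdot(\ko{R}-\km R)$. All degrees $k\ko{R}$ with $1\le k\le a_\ci-2=\rounddown{A}+\rounddown{B}$ are automatically \qG; only the ``last'' degree $k=a_\ci-1$ is in doubt. There the \qG-condition becomes $|I|\ge a_\ci-1$, i.e.\ $\{A\}+\{B\}\ge 1$, while the \VW-condition becomes $k\le\min\{c|I|,c'|I|\}$ with $c=-1/g$ and $c'=1/h$ in $(\Z/\km\Z)^*$; the equality $c=1$ (resp.\ $c'=1$) is exactly $\{A\}=\frac{1}{\km}$ (resp.\ $\{B\}=\frac{1}{\km}$). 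This arithmetic with $c,c'$ and the extended-lattice comparison is the substance of the dichotomy; ``T-singularity arithmetic of the endpoints'' is too vague to capture it, and without the zone/$\ko{R}$ machinery you have no mechanism to isolate the single extra \VW-degree.
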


\begin{proof}
The first two parts, i.e.\ the description of the V-deformations
follows from (\ref{restrictPhi}). The remaining two parts are just another
formulation of Theorem \ref{th-lastDef}.
\end{proof}

\subsection{Implications for moduli spaces}
One can construct compactified moduli spaces for surfaces of general type 
using either KSB-deformations or V-deformations. Let us denote these by
${\mathbf M}(\mbox{KSB})$ and ${\mathbf M}(\mbox{V})$. By Theorem 
\ref{th-VqG},
the underlying reduced structures of these moduli spaces are isomorphic.
As a consequence of our computations we can say that the scheme structures 
are not isomorphic.

More generally, let $X$ be a projective variety with isolated 
singularities
$x_1,\dots, x_m$. Any flat deformation of $X$ restricts to a deformation 
of the singularities  $(x_i, X)$. This induces a map of the local 
deformation spaces
$$
\Re: \operatorname{Def}(X)\to
\operatorname{Def}(x_1, X)\times \cdots \times \operatorname{Def}(x_m, X).
$$
A direct consequence of the definition of \qG-deformations given in
\cite{KSB} is that
$\operatorname{Def}_{\qg}(x, X)$ is smooth
  for 2-dimensional quotient singularities. Our computations show that,
by contrast, $\operatorname{Def}_{V}(x, X)$ is usually non-reduced but
$$
\operatorname{red}\bigl(\operatorname{Def}_{V}(x, 
X)\bigr)=\operatorname{Def}_{\qg}(x, X).
$$
We thus expect that if $X$ is a surface with quotient singularities
then 
$\operatorname{Def}_V(X)$ can be non-reduced but
$\operatorname{Def}_{\qg}(X)$ should be smooth.
This is not true in general, but there are many examples when
local-to-global obstructions vanish and  the map $\Re$ is smooth.
The situation is not well understood for surfaces of general type, but
\cite[Prop 3.1]{MR2581246} shows that
local-to-global obstructions vanish for Del~Pezzo surfaces. Thus we obtain 
that if $S$ is a Del~Pezzo surface with quotient singularities then
$\operatorname{Def}_{\qg}(S)$ is smooth but
$\operatorname{Def}_{V}(S)$ is nonreduced as soon as  $S$ has at least 1 
singular point of multiplicity $\geq 5$.

\section{Five descriptions of cyclic quotient singularities}
\label{fiveCQS}
In (\ref{normAct}) -- (\ref{coneInterval})
we present several ways of representing
two-di\-men\-sio\-nal cyclic quotient singularities 
$S=\A^2_k/G$,
i.e.\ those coming from a cyclic group $G$ acting 
on $\A^2_k$.
While most of them are quite classic, the description (\ref{coneInterval})
seems to be not common so far.
At the end, in (\ref{groundedCones}), we introduce the notion
of grounded singularities. In the language of (\ref{coneInterval})
this becomes especially simple.

\subsection{Normalizing the action}
\label{normAct}
Let $G$ denote a cyclic group of order $n$ 
with $\chr k\nmid n$.
Then, by 
\cite[\S2]{Brieskorn},
every linear action of $G$ on $\A^2_k$ is isomorphic
to some action $\tfrac1{n}(1,q)$ generated by
$$
(x,y)\mapsto (\eta x, \eta^q y),
$$
where $q\in(\Z/n\Z)^*$ and
$\eta$ is a primitive $n$-th root of unity.
The corresponding ring of invariants  is $R_{n,q}:=k[x,y]^G$ and the
corresponding quotient singularity is
$$
S_{n,q}:=\a^2/\tfrac1{n}(1,q)=\spec R_{n,q}.
$$
While we work with this affine model, all the results apply to its
localization, Henselisation or  completion at the origin.
We can also choose  $\eta'=\eta^q$ as our primitive $n$-th root of unity.
This shows the isomorphism
$$
S_{n,q}\cong S_{n,q'} \mbox{ where } 
qq'=1 \mbox{ in } (\Z/n\Z)^*.
$$
Note that we can and will choose a representative for $q$
such that $1\leq q\leq n-1$. The case $q=n-1$ encodes the 
$A_{n-1}$-singularities. These are exceptional for many of the subsequent
formulas, so we assume from now on that $q\neq -1$ in $(\Z/n\Z)^*$.

\subsection{The abc notation}
\label{abcNotation}
Here we just rename the invariants $n$ and $q$. Denote
$b:=\gcd(n,q+1)$, $a:=n/b$, and $c:=(q+1)/b$. 
Hence we know that $\gcd(a,c)=1$, 
and $n$ and $q$ can be recovered as 
$n=ab$ and $q=bc-1$.
When using these invariants, we might write
$S_{abc}=\frac{1}{ab}(1,bc-1)$
instead of $S_{n,q}=\frac{1}{n}(1,q)$.
\\[0.5ex]
Note that the case $q=n-1$ which was just excluded
at the end of (\ref{normAct}) 
can be recovered in the abc language as the case $a=1$.
\\[1ex]
The isomorphic singularities $S_{n,q}$ and $S_{n,q'}$ from (\ref{normAct})
share the same $a$ and $b$, i.e.\ $a'=a$ and $b'=b$. This follows from the
fact that $qq'\equiv 1\hspace{-0.3em}\mod n$
implies $qq'\equiv 1\hspace{-0.3em}\mod b$ and that
$q\equiv -1\hspace{-0.3em}\mod b$ becomes then equivalent to
$1\equiv -q'\hspace{-0.3em}\mod b$. The third invariants $c$ and $c'$
differ. However, it is in general not true that they are mutually inverse
within $(\Z/a\Z)^*$. See the discussion at the end of (\ref{coneInterval}).

\subsection{The toric nature of $S_{n,q}$}
\label{toricSnq}
Dealing with toric varieties involves a standardized
language, cf.~\cite{CoxLittleSchenck} for details: Assume that
$N$ and $M$ are mutually dual free abelian groups of 
finite rank;
with $N_\Q$ and $M_\Q$ we denote the associated $\Q$-vector spaces;
similarly we often write $N_\kk$ and $M_\kk$ for $N\otimes_\Z\kk$ and
$M\otimes_\Z\kk$, respectively.
Let $\sigma\subseteq N_\Q$ be a polyhedral cone
and denote by $\sigma\dual:=\{r\in M_\Q\kst \langle \sigma,r\rangle\geq 0\}$
the dual one. Then, $\sigma\dual\cap M$ is a finitely generated semigroup,
and its $M$-graded semigroup ring 
(with $\kk$-basis $\{x^r\kst r\in \sigma\dual\cap M\}$)
provides the affine toric variety
$$
\toric(\sigma,N):=\Spec \kk[\sigma\dual\cap M].
$$
Since we are going to deal with surface singularities,
$N$ and $M$ will be of rank two. Hence, the primitive generators
of $\sigma$ and $\sigma\dual$ are just pairs $\siga,\sigb\in N$ and
$\sigr,\sigs\in M$ 
(with $\langle\siga,\sigr\rangle=\langle\sigb,\sigs\rangle=0$),
respectively. The relation to (\ref{normAct}) is the well-known

\begin{proposition}
\label{prop-toricCQS}
$\,S_{n,q}=\toric(\sigma,\Z^2)$ where
$
\,\sigma=\langle (1,0),\,(-q,n)\rangle\subseteq\Q^2=N_\Q
$.
\end{proposition}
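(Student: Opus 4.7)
The plan is to recognize the invariant ring as a semigroup algebra, present the quotient in a natural toric form using a finer lattice, and finally change basis to match the stated $\sigma$.

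First I would describe $R_{n,q}$ explicitly. Because $G\cong\mu_n$ acts diagonally on $k[x,y]$, the monomial $x^ay^b$ is an eigenvector with eigenvalue $\eta^{a+qb}$ and is therefore invariant precisely when $a+qb\equiv 0\pmod n$. Hence
$$
R_{n,q} \;=\; k[\Lambda],\qquad
\Lambda \;=\; M\cap\sigma_0^{\vee},
$$
where $\sigma_0:=\langle (1,0),(0,1)\rangle\subseteq\Q^2$ is the first quadrant and $M:=\{(a,b)\in\Z^2\kst a+qb\equiv 0\pmod n\}$ is a sublattice of index $n$ in $M_0:=\Z^2$.

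Next I would put this into the toric form $\toric(\sigma_0,N_1)$ with $N_1:=\Hom(M,\Z)\subseteq\Q^2$. Dualising the inclusion $M\subset M_0$ of index $n$ gives $N_0:=\Hom(M_0,\Z)=\Z^2\subset N_1$ with index $n$; the vector $\tfrac{1}{n}(1,q)$ lies in $N_1$ because it pairs integrally with every element of $M$, and it has order $n$ modulo $N_0$, so $N_1=\Z^2+\Z\cdot\tfrac{1}{n}(1,q)$. By the construction recalled in (\ref{toricSnq}), this yields $\toric(\sigma_0,N_1)=\Spec k[\sigma_0^{\vee}\cap M]=\Spec R_{n,q}=S_{n,q}$.

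The last step is to pick a $\Z$-basis of $N_1$ that identifies $N_1\cong\Z^2$ as in the statement, and then read off $\sigma_0$ in the new coordinates. The natural choice is $f_1:=(0,1)$ and $f_2:=\tfrac{1}{n}(1,q)$, which I would verify form a basis by checking that the determinant $-1/n$ matches the index of $N_0$ inside $N_1$. Expressing the old generators in this basis gives $(1,0)=-q f_1 + n f_2$ and $(0,1)=f_1$, so in the new coordinates $\sigma_0$ becomes $\langle(-q,n),(1,0)\rangle$, which is precisely the stated $\sigma$.

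The only thing to watch is the direction of duality---making sure the overlattice appears on the $N$-side rather than on $M$---and that this particular basis change produces the representative cone claimed rather than a $\GL_2(\Z)$-equivalent one. Once the conventions are fixed, each step is an immediate calculation.
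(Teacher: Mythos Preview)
Your proof is correct and follows essentially the same route as the paper: identify the invariant ring as $k[\N^2\cap M]$ for the sublattice $M=\{(a,b)\kst a+qb\equiv 0\pmod n\}$, then pass to an adapted $\Z$-basis and read off the cone. The only cosmetic difference is that the paper works on the $M$-side, choosing the basis $\{[-q,1],[n,0]\}$ of $M$ and checking directly that non-negativity of the entries becomes $\langle(1,0),\kbb\rangle\geq 0$ and $\langle(-q,n),\kbb\rangle\geq 0$, whereas you carry out the dual computation on the $N$-side; your basis $\{(0,1),\tfrac{1}{n}(1,q)\}$ of $N_1$ is exactly the dual of the paper's basis of $M$, so the two calculations are the same one viewed from opposite ends.
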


Note that we use $\langle\kbb,\kbb\rangle$ to denote both the pairing
$N\times M\to\Z$ and the generation of a polyhedral cone. Moreover,
when using coordinates, we try to distinguish between $M$ and $N$ by using
the different brackets $[\kbb,\kbb]$ and $(\kbb,\kbb)$, respectively.
So we will write 
$\sigma\dual=\langle [0,1],\,[n,q]\rangle\subseteq\Q^2=M_\Q$.
That is, $\siga=(1,0)$, $\sigb=(-q,n)$, $\sigr=[0,1]$, and $\sigs=[n,q]$.
The group order $n$ may be recovered as $\det\sigma:=\det(\siga,\sigb)$.

\begin{proof}
Writing $k[x,y]$ as the semigroup ring $k[\N^2]$, we know that
$R_{n,q}=k[x,y]^G=k[\N^2\cap M]$ 
where $M\subseteq\Z^2$ is the sublattice
freely generated by, e.g.\ $[-q,1], [n,0]\in\Z^2$.
Now, a linear combination
$
\lambda\cdot[-q,1]+\mu\cdot [n,0]=[-\lambda q + \mu n,\, \lambda]
$
has non-negative entries if and only if
$
\langle (-q,n), [\lambda,\mu]\rangle\geq 0
$
and
$
\langle (1,0), [\lambda,\mu]\rangle\geq 0
$.
\end{proof}

\subsection{Equations of $S_{n,q}$ via continued fractions}
\label{equSnq}
Let $S:=\toric(\sigma)$ for some two-dimensional
cone $\sigma=\langle\siga,\sigb\rangle$ as in (\ref{toricSnq}).
Denote by $E\subset\sigma\dual\cap M$ the set of indecomposable 
elements within this semigroup (``Hilbert basis'').
This finite set coincides with the lattice points on the
compact edges of $\conv(\sigma\dual\cap M\setminus 0)$. In particular, we can
naturally list its elements as 
$E=\{\sigr,r^2,\ldots,r^{e-1},\sigs\}$ with $e\geq 4$
(the cases $e=2$ and $e=3$ refer to $S$ being smooth or an
$A_{n-1}$-singularity).
Any two adjacent elements of this set do always form 
a $\Z$-basis of $M\cong\Z^2$.
Hence, for $i=2,\ldots,e-1$, we can write 
$$
r^{i-1}+r^{i+1}=a_i\cdot r^i
$$ 
with natural numbers $a_i\geq 2$. 
The continued fraction $[a_2,\ldots,a_{e-1}]:=
a_2-\frac{1}{a_3-\ldots}$ recovers
$\frac{n}{n-q}$. Moreover, $E$ provides an embedding
$S\hookrightarrow\A_\kk^e$. Among the equations one finds
$x_{i-1}x_{i+1}-x_i^{a_i}$, see \cite{riems209} for more details.

\begin{remark}
\label{rem-Ji}
With growing $i$, the values $\langle\siga,r^i\rangle$ 
and $\langle\sigb,r^i\rangle$ increase and decrease, respectively.
Hence, defining 
$\eta_i:=\min\{\frac{\langle\siga,r^{i+1}\rangle}{\langle\siga,r^i\rangle},
\frac{\langle\sigb,r^{i-1}\rangle}{\langle\sigb,r^i\rangle}\}\in\Q_{\geq 1}$,
we obtain that $\rounddown{\eta_i}=a_i-1\in\Z_{\geq 1}$.
\end{remark}

\subsection{Replacing cones by intervals}
\label{coneInterval}
Let $\sigma=\langle\siga,\sigb\rangle$ and 
$\sigma\dual=\langle\sigr,\sigs\rangle$ be mutually dual
(two-dimensional, rational) cones as before. The primitive elements
$R\in\innt\sigma\dual\cap M$ (we will call them 
{\em primitive degrees} of $\sigma$) 
give rise to affine crosscuts $Q(\sigma,R):=\sigma\cap[R=1]$.
Since the affine line $[R=1]$ can be identified with 
the rational line $\Q^1$
(canonically, up to integral shifts), we can and will
understand $Q(\sigma,R)$ as an interval in $\Q$.
\\[1ex]
Reciprocally, every closed interval $I\subseteq \Q$ provides a cone via
$C(I):=\Q_{\geq 0}\cdot (I,1)\subseteq\Q^2$ and a primitive degree
$R:=[0,1]$. These two constructions
provide a natural one-one correspondence
$$
\big\{\mbox{pairs $(\sigma,R)$}\big\}\big/\Sl(2,\Z)
\longleftrightarrow
\big\{\mbox{bounded intervals $I\subseteq\Q$}\big\}\big/
\{\mbox{$\Z$-shifts}\}.
$$
On the other hand, every cone $\sigma$ provides a canonical primitive
degree $\ko{R}$,
called the central degree.
It is defined as the
primitive generator of the ray $\Q_{\geq 0}\cdot (\sigr+\sigs)$.
It is the only primitive degree such that 
$\langle\siga,\ko{R}\rangle = \langle \sigb,\ko{R}\rangle$.
Using coordinates via the $(n,q)/(a,b,c)$ language 
discussed in (\ref{abcNotation}),
one obtains that $\sigr+\sigs=[n,q+1]=b\cdot[a,c]$, hence
$\ko{R}=[a,c]=\frac{\sigr+\sigs}{b}$.

\begin{remark}
\label{rem-index}
Actually, $S=\toric(\sigma)$ is $\Q$-Gorenstein with index $a$
(and we suppose that $a>1$).
The corresponding power $\omega_S^{[a]}$ equals the ideal
$(x^{\ko{R}})\subseteq\CO_S$ represented by the shifted semigroup
$\ko{R}+(\innt\sigma\dual\cap M)$. Thus, properly speaking,
not $\ko{R}$ but the non-integral $\frac{1}{a}\ko{R}$ is
the truly canonical degree.
\end{remark}

Using this special central degree $\ko{R}$, 
the previous correspondence yields

\begin{proposition}
\label{prop-CentrDeg}
There is a one-one correspondence
$$
\big\{\mbox{\rm cones $\sigma$}\big\}\big/\Sl(2,\Z)
\longleftrightarrow
\big\{\mbox{\rm intervals $I\subseteq\Q$ with uniform denominators}\big\}\big/
\{\mbox{\rm $\Z$-shifts}\}.
$$
\end{proposition}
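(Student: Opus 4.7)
The plan is to specialize the bijection established just above in (\ref{coneInterval}), which matches pairs $(\sigma, R)$ modulo $\Sl(2,\Z)$ with arbitrary bounded intervals $I \subseteq \Q$ modulo $\Z$-shifts. Because the central degree $\ko{R}$ is intrinsically attached to $\sigma$ and its defining property $\langle\siga,\ko{R}\rangle = \langle\sigb,\ko{R}\rangle$ is $\Sl(2,\Z)$-equivariant, the assignment $\sigma \mapsto (\sigma, \ko{R})$ embeds $\{\text{cones}\}/\Sl(2,\Z)$ into the set of pairs. It therefore suffices to identify those intervals which, under the earlier bijection, correspond exactly to central-degree pairs, and to show that the answer is precisely the intervals with uniform denominators.

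For the key computation, fix a primitive $R \in \innt \sigma\dual \cap M$ and choose a primitive generator $v$ of $\ker R \subseteq N$ together with a lattice point $p_0 \in N$ satisfying $\langle p_0, R\rangle = 1$; this identifies $[R=1] \cap N_\Q$ with $\Q$, well-defined up to $\Z$-shifts (corresponding to $p_0 \mapsto p_0 + mv$). Writing the primitive generators as $\siga = k_\siga\, v + a_\siga\, p_0$ and $\sigb = k_\sigb\, v + a_\sigb\, p_0$, where $a_\siga := \langle \siga, R\rangle$ and $a_\sigb := \langle \sigb, R\rangle$, the crosscut endpoints $\siga/a_\siga$ and $\sigb/a_\sigb$ translate into the rational numbers $k_\siga/a_\siga$ and $k_\sigb/a_\sigb$. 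Since $\siga$ and $\sigb$ are primitive in $N = \Z v \oplus \Z p_0$, both fractions are already in reduced form. Hence the two endpoints of the resulting interval share a common reduced denominator if and only if $a_\siga = a_\sigb$, which by definition is equivalent to $R = \ko{R}$.

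For the reverse direction one must check that every interval $I = [-\Ia, \Ib]$ with common reduced denominator $a$ genuinely arises from a cone paired with its central degree. Writing $\Ia = p/a$ and $\Ib = p'/a$ with $\gcd(p, a) = \gcd(p', a) = 1$, the cone $C(I) = \langle (-p, a),\, (p', a)\rangle$ has both primitive generators sharing the second coordinate $a$, so any primitive $R = [r_1, r_2] \in M$ with $\langle (-p, a), R\rangle = \langle (p', a), R\rangle$ forces $(p+p')r_1 = 0$ and hence $R = [0,1]$; this is the very degree used to recover $I$ from $C(I)$, confirming that $\ko{R}_{C(I)} = [0,1]$. The only mildly delicate point is bookkeeping the various choices (splitting of $N$, orientation, $\Z$-shifts on the line) so the passage cone $\leftrightarrow$ interval remains well-defined modulo the stated equivalences; once this is in place, the equivalence ``uniform denominators $\iff R = \ko{R}$'' is just the last line of the key computation.
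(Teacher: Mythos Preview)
Your proof is correct and follows essentially the same approach as the paper's. The paper's argument is just a streamlined coordinate version of your key computation: after an $\Sl(2,\Z)$-change putting $\ko{R}=[0,1]$, the primitive generators become $\siga=(g,\km)$, $\sigb=(h,\km)$ with common second coordinate $\km$, so $Q(\sigma,\ko{R})=[\tfrac{g}{\km},\tfrac{h}{\km}]$ has uniform denominators; conversely, for $I=[\tfrac{g}{\km},\tfrac{h}{\km}]$ the primitive generators of $C(I)$ are $(g,\km),(h,\km)$, whence $[0,1]$ pairs to $\km$ with both and is therefore central. Your coordinate-free choice of $v,p_0$ and the observation that primitivity of $\siga,\sigb$ forces the fractions to be reduced is exactly this, just phrased more intrinsically.
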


We call $I$ to have ``{\em uniform denominators}''
(at the end points) if both become equal in the reduced forms,
i.e.\ if $I=[\frac{g}{\km},\frac{h}{\km}]$ with $g,h,\km\in\Z$ and
$\gcd(g,\km)=\gcd(h,\km)=1$.

\begin{proof}
$(\then$) After a possible coordinate change, we may assume that
$\ko{R}=[0,1]$. Setting 
$\km:=\langle \siga,[0,1]\rangle = \langle \sigb,[0,1]\rangle$
we obtain that $\siga=(g,\km)$ and $\sigb=(h,\km)$, hence
$Q(\sigma,\,[0,1])=[\frac{g}{\km},\frac{h}{\km}]$
for some $g,h$ as asked for in the claim.
\\[1.0ex]
($\neht$) If $I=[\frac{g}{\km},\frac{h}{\km}]$,
then $C(I)=\langle (\frac{g}{\km},1),\,(\frac{h}{\km},1)\rangle
= \langle (g,\km),\,(h,\km)\rangle$, i.e.\ its primitive generators are
$\siga=(g,\km)$ and $\sigb=(h,\km)$. Thus, $R=[0,1]$ coincides with $\ko{R}$.
\end{proof}
\vspace{1ex}

In (\ref{toricSnq}) we had considered cones 
$\sigma=\langle \siga,\sigb\rangle=\langle (1,0),\,(-q,n)\rangle$, i.e.\
$n=|\det(\siga,\sigb)|$, and $q$ was characterized by $n|(q\siga+\sigb)$.
Alternatively we had used $b:=\gcd(n,q+1)$ to write $n=ab$ and $q+1=bc$ 
in (\ref{abcNotation}).
Now, given an interval $I=[\frac{g}{\km},\frac{h}{\km}]$ as in 
Proposition~\ref{prop-CentrDeg}, 
it has length $|I|=\frac{h-g}{\km}$, and
we may obtain the invariants $(a,b,c)$ for $\sigma:=C(I)$ via

\begin{proposition}
\label{prop-abcInv}
For $I=[\frac{g}{\km},\frac{h}{\km}]$ 
\vspace{0.3ex}
one has $a=\km$, $b=h-g$, and $c=-1/g\in (\Z/\km\Z)^*$.
In particular, $b/a=n/\km^2=|I|$ and $\,\ind(\omega_S)=\km$
with $S=\toric(C(I))$.
\end{proposition}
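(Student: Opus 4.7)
The plan is to transport the cone $\sigma := C(I) = \langle (g,m), (h,m)\rangle$ into the standard form $\langle (1,0), (-q,n)\rangle$ of Proposition~\ref{prop-toricCQS} by a lattice automorphism of $N$, and then read off the invariants. Since integer shifts of $I$ correspond to lattice automorphisms of $N$ that do not change $\toric(\sigma,N)$, we may assume $0<g<h$. Then $n=|\det(\alpha,\beta)|=m(h-g)$ is immediate.

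To locate $q$, the idea is Bézout: the uniform-denominator hypothesis gives $u,v\in\Z$ with $ug+vm=1$, and the $\GL(2,\Z)$-matrix
\[
M:=\begin{pmatrix} u & v\\ m & -g\end{pmatrix}
\]
of determinant $-1$ sends $(g,m)^T\mapsto (1,0)^T$. Computing the image of $(h,m)^T$ under $M$ identifies $-q$ modulo $n$; a brief simplification using $vm=1-ug$ (to eliminate $v$) gives $q+1\equiv -u(h-g)\pmod n$. From here
\[
b=\gcd(n,q+1)=\gcd\bigl(m(h-g),\,u(h-g)\bigr)=(h-g)\cdot\gcd(m,u)=h-g,
\]
since $ug+vm=1$ forces $\gcd(m,u)=1$. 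Consequently $a=n/b=m$ and $c\equiv -u\equiv -g^{-1}\pmod m$. The remaining identities $b/a=(h-g)/m=|I|$ and $n/m^2=m(h-g)/m^2=|I|$ are immediate, and $\ind(\omega_S)=a=m$ is Remark~\ref{rem-index}.

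The main subtlety I expect is the sign/orientation bookkeeping. The natural matching suggested by the proof of Proposition~\ref{prop-CentrDeg} sends $(g,m)\leftrightarrow (1,0)$, which forces $M\in\GL(2,\Z)\setminus\SL(2,\Z)$ rather than a direct $\SL$-transformation; it is precisely this orientation reversal that produces the minus sign in $c\equiv -g^{-1}$. The alternative labeling $(h,m)\leftrightarrow (1,0)$ would produce $c\equiv +h^{-1}$ instead, and this choice corresponds to the ambiguity $S_{n,q}\cong S_{n,q^{-1}}$ of \S\ref{normAct}. So all the nontrivial content is in pinning down one consistent convention — once that is done, the rest is a direct computation with Bézout coefficients.
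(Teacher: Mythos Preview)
Your argument is correct. You derive the invariants by an explicit $\GL(2,\Z)$ change of basis sending $(g,m)\mapsto(1,0)$, read off $q$ from the image of $(h,m)$, and then compute $b=\gcd(n,q+1)$ via the B\'ezout relation; the orientation discussion is accurate and explains the sign in $c\equiv -g^{-1}$.

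The paper takes a different, shorter route: rather than constructing the transformation and deriving $q$, it simply \emph{verifies} the claimed values. It sets $a:=m$, $b:=h-g$, $c:=-g^{-1}\in(\Z/m\Z)^*$, puts $n:=ab$, $q:=bc-1$, and checks the characterizing divisibility $n\mid(q\alpha+\beta)$ directly by expanding $q\alpha+\beta=\bigl((h-g)(cg+1),\,(h-g)cm\bigr)$, which is visibly divisible by $(h-g)m$ since $cg\equiv -1\pmod m$ by the very choice of $c$. Your approach has the virtue of showing where the formula $c\equiv -g^{-1}$ comes from and of making the $q\leftrightarrow q'$ ambiguity transparent via the two possible labelings; the paper's approach buys brevity by reducing everything to a one-line divisibility check once the answer is known.
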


\begin{proof}
Let $a,b,c$ be as in the claim.
By definition, we have $\gcd(a,c)=1$.
We have to show that the generators $\siga=(g,\km)$ and $\sigb=(h,\km)$
of $C(I)$
and the invariants $n:=ab$, $q:=bc-1$ yield isomorphic cones:
First, we clearly obtain that
$|\det(\siga,\sigb)|=(h-g)\km=n$.
It remains to check the characterizing relation
$n|(q\siga+\sigb)$. But this follows from
$$
q\siga+\sigb=\big( (h-g)\,c-1\big)\cdot (g,\km) + (h,\km) \;=\;
\big((h-g)(cg+1),\; (h-g)\,c\km\big)
$$
which is indeed divisible by $n=(h-g)\km$.
Finally, $\,\ind(\omega_S)=a$ by Remark \ref{rem-index}.
\end{proof}

In (\ref{abcNotation}) we mentioned the invariant $c'$ associated
to $(n,q')$ as it was $c$ to $(n,q)$. In the
``interval language'', to switch $q$ and $q'$ means to replace $I$ by $-I$,
i.e.\ to keep $\km$ and to replace $g$ and $h$ by $-h$ and $-g$, respectively.
In particular, this implies that $c'=1/h\in(\Z/\km\Z)^*$.
\\
Moreover, there is a way to visualize both $c$ and $c'$: The
points $[c,\,-\frac{gc+1}{\km}]$ and $[-c',\,\frac{hc'-1}{\km}]$
appear as the first lattice points on the two rays of the shifted cone
$C(I)\dual-[0,\frac{1}{\km}]$, cf.~the proof of Proposition \ref{prop-finalVW}.

\subsection{Grounded cones and intervals}
\label{groundedCones}
To represent two-dimensional cones $\sigma$ by intervals $I$ 
via Proposition \ref{prop-CentrDeg}, the central degree 
$\ko{R}$ played an important role. This leads to the following
notion:

\begin{definition}
\label{def-grounded}
A two-dimensional, polyhedral cone $\sigma$ 
(or the associated interval $I$,
or the associated singularity $S_{n,q}=S_{abc}=\toric(\sigma)$) 
is called {\em grounded} $:\iff$ the central degree $\ko{R}$
belongs to the Hilbert basis
$E=\{\sigr,r^2,\ldots,\sigs\}$ of $\sigma\dual\cap M$,
i.e.\ $\ko{R}$ is irreducible within this semigroup.
If $\ko{R}=r^\ci$, then $\ci$ is called the central index.
\end{definition}

\begin{proposition}
\label{prop-piercInt}
An interval $I\subseteq\Q$ with uniform denominators is grounded if and only if
it contains an interior integer.
\end{proposition}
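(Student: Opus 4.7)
The plan is to translate ``grounded'' into decomposability of $\bar R$ in $\sigma\dual\cap M$ and then to analyse the resulting parallelogram of candidate summands directly in terms of $I$. By Proposition \ref{prop-CentrDeg} I may pass to coordinates in which
\[
\sigma=\langle (g,\km),(h,\km)\rangle,\quad \sigma\dual=\langle [\km,-g],[-\km,h]\rangle,\quad \bar R=[0,1],\quad I=[g/\km,\,h/\km],
\]
with $\gcd(g,\km)=\gcd(h,\km)=1$. By definition $S$ is grounded iff $\bar R$ is indecomposable in $\sigma\dual\cap M$, equivalently iff the parallelogram
\[
P:=\sigma\dual\cap(\bar R-\sigma\dual)=\big\{[x,y]\in M_\Q \,\big|\, 0\le gx+\km y\le \km,\; 0\le hx+\km y\le \km\big\}
\]
contains no lattice point other than $[0,0]$ and $\bar R$. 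The slice $x=0$ of $P$ contributes only these two points, and $P$ is invariant under the involution $[x,y]\mapsto\bar R-[x,y]$, so the question reduces to whether $P\cap M$ admits a lattice point with $x\ge 1$.

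For the direction ``no interior integer $\then$ not grounded'' I would argue constructively. If $I\subseteq[q,q+1]$ for some $q\in\Z$, then $q\km\le g$ and $h\le(q+1)\km$, so both $g-q\km$ and $h-q\km$ lie in $[0,\km]$; hence $[1,-q]\in P\cap M\setminus\{[0,0],\bar R\}$, and $\bar R=[1,-q]+[-1,q+1]$ exhibits $\bar R$ as decomposable.

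For the reverse direction, I would assume $[x,y]\in P\cap M$ with $x\ge 1$ and put $q:=-y\in\Z$. The four defining inequalities of $P$ then become
\[
q\km\le gx,\quad gx\le(q+1)\km,\quad q\km\le hx,\quad hx\le(q+1)\km,
\]
whose first and fourth rearrange to $I=[g/\km,h/\km]\subseteq[q/x,(q+1)/x]$. If $k\in\Z$ were interior to $I$, then $q\le gx/\km<kx<hx/\km\le q+1$ would force the integer $kx$ into the open interval $(q,q+1)$, which is impossible. This proves the contrapositive.

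The conceptual heart -- and the main obstacle -- is this final rescaling: a decomposing vector with arbitrary first coordinate $x\ge 1$ forces $I$ only into an interval of length $1/x$ rather than $1$, so at first sight one worries that larger $x$ could admit new decompositions not captured by the easy $x=1$ case; however, integer points of $I$ correspond to integer points of $xI$, and the ``integer in $(q,q+1)$'' contradiction rules out every $x\ge 1$ uniformly. The remaining bookkeeping (checking the claimed presentation of $P$, reducing to $x\ge 1$ by the involution $[x,y]\mapsto\bar R-[x,y]$, and verifying $[1,-q]\in P$ for the easy direction) is routine once the coordinate normalisation from Proposition \ref{prop-CentrDeg} is in place.
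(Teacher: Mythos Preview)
Your argument is correct. For the direction ``no interior integer $\Rightarrow$ not grounded'' you and the paper do essentially the same thing: exhibit the explicit decomposition $\bar R=[1,-q]+[-1,q+1]$ (the paper first shifts $I$ so that $q=0$ and writes this as $[0,1]=[1,0]+[-1,1]$). For the converse the approaches diverge. The paper shifts so that $0\in\innt I$, i.e.\ $g<0<h$, and then simply observes that $\sigma\dual\setminus\{0\}$ lies in the open upper half-plane $\Q\times\Q_{>0}$; hence any two nonzero summands of $[0,1]$ would each have second coordinate $\geq 1$, which is impossible. Your contrapositive via the parallelogram $P$ and the rescaling $k\mapsto kx$ is more computational but has the virtue of dealing with decomposing vectors of arbitrary first coordinate $x\geq 1$ uniformly, whereas the paper's half-plane remark makes that worry evaporate at the outset. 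Both routes are short; the paper's is a touch more conceptual, yours more systematic.
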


\begin{proof}
$(\neht)$ We may assume that $0\in\innt I$, i.e.\
$I=[\frac{g}{\km},\frac{h}{\km}]$ with $\km>0$, $g<0$, and $h>0$. 
Then, the dual cone of $\sigma=C(I)$ equals
$\sigma\dual=\langle [-\km,h],\,[\km,-g]\rangle\subseteq 
(\Q\times \Q_{>0})\cup\{[0,0]\}$.
On the other hand, the central degree $\ko{R}$
coincides with
$[0,1]$, and it is obvious that this is irreducible
even within the semigroup $(\Z\times \Z_{>0})\cup\{[0,0]\}$.
\\[0.5ex]
($\then$) 
Let $I=[\frac{g}{\km},\frac{h}{\km}]$ with $\km>0$ and $g<h$,
i.e.\ $\sigma\dual=\langle [-\km,h],\,[\km,-g]\rangle$. We are going to show
that we can obtain $g<0$ and $h>0$ by an integral shift of $I$.
Obviously, we can assume that $0<h<\km$ implying that
$$
[-1,1]\in\innt \langle [-\km,h],\,[0,1]\rangle \subseteq\innt\sigma\dual.
$$
On the other hand, if we had $g>0$, then this would similarly imply that
$$
[1,0]\in\innt \langle [0,1],\,[\km,-g]\rangle \subseteq\innt\sigma\dual.
$$
Then $\ko{R}=[0,1]=[-1,1] + [1,0]$ would be a decomposition 
within $\innt\sigma\dual$.
\end{proof}

Grounded intervals can always be shifted by integers to look like
$I=[-\Ia,\Ib]$ with $\Ia,\Ib\in\Q_{>0}$ (sharing the same denominator).
Then, if $\ci$ denotes the central index,
we can directly express the invariants $\eta_\ci$ and $a_\ci$ from 
(\ref{equSnq})
in terms of $I$:

\begin{proposition}
\label{prop-etaci}
Let $I=[-\Ia,\Ib]$ with $\Ia,\Ib\in\Q_{>0}$ be a 
{\rm(}grounded{\rm)} interval
with uniform denominators. 
Then 
\vspace{-0.3ex}
$$\eta_\ci=1+\min\{\rounddown{\Ia}+\Ib,\, \Ia+\rounddown{\Ib}\},
\hspace{0.8em}
a_\ci=2+\rounddown{\Ia}+\rounddown{\Ib}, 
\hspace{0.6em} \mbox{and} \hspace{0.5em}
|I|=\Ia+\Ib.
\vspace{0.3ex}
$$
\end{proposition}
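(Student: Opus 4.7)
The plan is to pass to the $\Sl(2,\Z)$-normalization from Proposition~\ref{prop-CentrDeg}, so that $\ko{R}=r^\ci=[0,1]$ and the primitive generators of $\sigma$ become $\siga=(g,\km)$ and $\sigb=(h,\km)$ with $\gcd(g,\km)=\gcd(h,\km)=1$, $\Ia=-g/\km$ and $\Ib=h/\km$. The identity $|I|=\Ia+\Ib$ is then immediate, as is $\langle \siga,r^\ci\rangle=\langle \sigb,r^\ci\rangle=\km$. Everything else reduces to locating the two Hilbert-basis neighbours $r^{\ci\pm 1}$ of $\ko{R}$, after which the recursion $r^{\ci-1}+r^{\ci+1}=a_\ci r^\ci$ from~(\ref{equSnq}) and the definition of $\eta_\ci$ in Remark~\ref{rem-Ji} deliver the two non-trivial formulas.

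To identify $r^{\ci-1}$, I would combine two facts recalled in~(\ref{equSnq}): adjacent Hilbert-basis elements form a $\Z$-basis of $M$, and $\langle \siga,r^i\rangle$ is strictly increasing in $i$ (Remark~\ref{rem-Ji}). The first forces $\det(r^{\ci-1},[0,1])=\pm 1$, so $r^{\ci-1}=[\pm 1,s]$ for some $s\in\Z$. The sign $-1$ is ruled out because $[-1,s]\in\sigma\dual$ would force $s\geq \Ib>0$, hence $s\geq 1$, while monotonicity $\langle \siga,r^{\ci-1}\rangle<\km$ would force $s<1-\Ia<1$. Thus $r^{\ci-1}=[1,s]$, and combining membership $s\geq \Ia$ with monotonicity $s<1+\Ia$ pins $s$ down inside $[\Ia,\Ia+1)$. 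The delicate point, and the only place where the standing assumption $a=\km>1$ really enters, is the observation that $\gcd(g,\km)=1$ together with $\km\geq 2$ forces $\Ia=-g/\km\notin\Z$, so $[\Ia,\Ia+1)$ contains exactly one integer, namely $\lceil\Ia\rceil=\rounddown{\Ia}+1$. Hence $r^{\ci-1}=[1,\rounddown{\Ia}+1]$, and the symmetric argument gives $r^{\ci+1}=[-1,\rounddown{\Ib}+1]$.

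Substituting into the recursion, $a_\ci[0,1]=r^{\ci-1}+r^{\ci+1}=[0,\rounddown{\Ia}+\rounddown{\Ib}+2]$ yields $a_\ci=2+\rounddown{\Ia}+\rounddown{\Ib}$. Computing the relevant pairings,
$$
\frac{\langle\siga,r^{\ci+1}\rangle}{\langle\siga,r^\ci\rangle}=\Ia+\rounddown{\Ib}+1,\qquad
\frac{\langle\sigb,r^{\ci-1}\rangle}{\langle\sigb,r^\ci\rangle}=\Ib+\rounddown{\Ia}+1,
$$
and taking the minimum produces the claimed $\eta_\ci=1+\min\{\rounddown{\Ia}+\Ib,\,\Ia+\rounddown{\Ib}\}$.
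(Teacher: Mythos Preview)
Your proof is correct and follows essentially the same route as the paper's: normalize so that $\ko{R}=[0,1]$, identify $r^{\ci\pm 1}$ as $[\pm 1,\rounddown{\Ia}+1]$ and $[\mp 1,\rounddown{\Ib}+1]$, and read off $a_\ci$ and $\eta_\ci$. The paper phrases the identification of $r^{\ci-1}$ as ``the lowest lattice point above the ray $\Q_{\geq 0}\cdot[1,\Ia]$'', while you instead use the monotonicity of $\langle\siga,\,\cdot\,\rangle$ from Remark~\ref{rem-Ji} to bound the second coordinate; these are two ways of saying the same thing, and your version has the virtue of making explicit both the exclusion of the sign $-1$ and the role of $\km\geq 2$ in ensuring $\Ia\notin\Z$.
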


\begin{proof}
With $\Ia=\frac{-g}{\km}$ and $\Ib=\frac{h}{\km}$, we have
$\sigma\dual=\langle [-\km,h],\,[\km,-g]\rangle$ as usual.
Now, since $\,r^\ci=\ko{R}=[0,1]$ and 
$\{r^{\ci-1},r^\ci\}$ forms a basis of $\Z^2$,
we know that $r^{\ci-1}=[1,\kbb]$, and it has to be the lowest lattice point
above the ray $\Q_{\geq 0}\cdot [\km,-g]=\Q_{\geq 0}\cdot [1,\Ia]$.
Thus, $r^{\ci-1}=[1,\rounddown{\Ia}+1]$ and, similarly,
$r^{\ci+1}=[-1,\rounddown{\Ib}+1]$.
Now, the claim for 
$\eta_\ci=\min\{\frac{\langle\siga,r^{\ci+1}\rangle}{\langle\siga,r^\ci\rangle},
\frac{\langle\sigb,r^{\ci-1}\rangle}{\langle\sigb,r^\ci\rangle}\}$
follows from $\siga=\km\cdot(-\Ia,1)$ and $\sigb = \km\cdot (\Ib,1)$.
\end{proof}

\section{The dualizing sheaf on infinitesimal deformations of $S$}
\label{canX}

The (isomorphism classes of)
infinitesimal $\kk[\keps]$-deformations (with $\keps^2=0$)
of a $\kk$-algebra are gathered in a vector space called $\TT$,
see \cite{janS} for a detailed introduction to deformation theory.
In case of 
toric varieties such as $S=\toric(\sigma)$ from (\ref{toricSnq}), 
the torus $\kT:=\Spec \kk[M]$ acts
on the variety, on the functions, and on all naturally defined modules.
In particular, the vector space $\TT$ becomes $M$-graded.
This can be made explicit by comparing the $M$-degrees of the defining
equations $f$ with those of the perturbation $g$ arising in
$f+\keps g$, cf.~(\ref{constX}).
Thus, the distribution along the degrees of $M$ becomes
the essential information.
We will study the dualizing sheaf $\omega_X$ on the total spaces $X=X_\xi$
for homogeneous elements $\xi\in \TT(S)$.

\subsection{Degrees carrying $\TT$}
\label{degT1}\label{SchlessT1}
Let $\sigma$ be a two-dimensional cone -- we will adopt the notation of
(\ref{toricSnq}) and (\ref{equSnq}).
The dimensions of the homogeneous components 
$\TT(S,-R)$ ($R\in M$) of the finite-dimensional
vector space $\TT(S)$ 
(abbreviated as $\TT(-R)\subseteq\TT$)
are, \cite{pinkham_CQS}:
\begin{enumerate}
\item[(i)]
\label{eckdegs}
$R=r^2$ or $R=r^{e-1}$: $\;\dim_\kk \TT(-R)=1$,
\item[(ii)]
\label{maindegs}
$R=r^i$ for $i=3,\ldots,e-2$: $\;\dim_\kk \TT(-R)=2$,
and
\item[(iii)]
\label{multipledegs}
$R=k\cdot r^i$ for $i=2,\ldots,e-1$ with $2\leq k\leq a_i-1$: 
$\;\dim_\kk \TT(-R)=1$.
\end{enumerate}

We would like to recall Pinkham's method to obtain this 
-- this approach will also provide the major tool for our own calculations of
$\omega_X$. However, unlike the original reference, we will 
consequently use the toric language. It leads to a slightly more
structured description than just naming the dimensions.

\subsubsection{Puncturing}
\label{puncturing}
The main point is to consider deformations of 
the smooth, but non-affine $S\setminus 0$ first. 
They are always locally trivial, and some of them lift to deformations
of $S$. The exact statement for the $\kk[\keps]$- level
is encoded in the exact sequence
$$
0 \to \TT \to \gH^1(S\setminus 0,\theta_S) \to
\gH^1(S\setminus 0, \CO_S^e)
$$
where the latter map is given by $\sum_{i=1}^e dx^{r^i}$. For the upcoming
calculations it is helpful to use this sequence for
redoing the calculation of $\TT(-R)$ for $R\in M$. 
Moreover, since we have a very nice open affine covering
$S\setminus 0=\toric(\siga)\cup\toric(\sigb)$
where we identify $\siga$ and $\sigb$ with the rays they are
generating,
hence $\toric(\siga)$ and $\toric(\sigb)$ are defined
similarly to $\toric(\sigma)$ in (\ref{toricSnq}). Since 
$$
\toric(\siga)\cap\toric(\sigb)=\toric(\siga\cap\sigb)=
\toric(0)=\Spec\kk[M]=\kT,
$$
this is easily done by using \v{C}ech cohomology:

\subsubsection{$\gH^1(S\setminus 0, \CO_S)$}
\label{cechO}
The $1$-\v{C}ech cocycles are 
$\kG(\kT,\CO_S|_\kT)=\kk[M]$, and the $1$-\v{C}ech coboundaries are
generated by the monomials $x^{-R}\in\kk[M]$ with 
$\langle \siga,R\rangle\leq 0$ or $\langle \sigb,R\rangle\leq 0$.
That is, 
$$
\gH^1(S\setminus 0, \CO_S)(-R)=\kk \iff R\in\innt\sigma\dual
\mbox{\ (and $=0$ otherwise).}
$$

\subsubsection{$\gH^1(S\setminus 0, \theta_S)$}
\label{cechTheta}
Here we use the derivations 
$\partial_a\in\theta(-\log\partial S)\subseteq\theta_S$
(with $\partial S:=S\setminus \kT$)
defined by $x^r\mapsto \langle a,r\rangle\cdot x^r$. Via 
$x^s\otimes a\mapsto x^s\partial_a$
they provide an isomorphism 
$\CO_{S}\otimes_\Z N\stackrel{\sim}{\to} \theta(-\log\partial S)$
and, restricted to $S\setminus 0$, the latter sheaf equals $\theta_S$.
\\[0ex]
In particular, the $1$-\v{C}ech cocycles of $\theta_S$ on $S\setminus 0$
are $\kk[M]\otimes N$, and 
the $1$-\v{C}ech coboundaries (in degree $-R$) are generated by 
$x^{-R}\partial_a$ with 
$\langle \siga,R\rangle\leq 1$ or $\langle \sigb,R\rangle\leq 1$.
Thus,
$$
\gH^1(S\setminus 0, \theta_S)(-R)=N_\kk 
\mbox{ for } 
\langle \siga,R\rangle, \langle \sigb,R\rangle\geq 2,
$$ 
and the remaining
cases where $\gH^1(S\setminus 0, \theta_S)(-R)$ is non-vanishing
are 
$\langle \siga,R\rangle=1$, $\langle \sigb,R\rangle\geq 2$ (then it becomes
$N_\kk/\kk\cdot\siga$) and
$\langle \siga,R\rangle\geq 2$, $\langle \sigb,R\rangle=1$ (yielding 
$N_\kk/\kk\cdot\sigb$). 
Note that these cases include
$R=r^2$ and $R=r^{e-1}$, respectively.

\subsubsection{The kernel}
\label{cechKer}
The $i$-th summand $dx^{r^i}$ maps $x^{-R}\partial_a$ to $\langle
a,r^i\rangle\cdot x^{-R+r^i}$. In particular,
whenever $R-r^i\in\innt\sigma\dual$, then $dx^{r^i}$ imposes 
the codimension one condition $\langle a,r^i\rangle=0$
on the derivation $x^{-R}\partial_a$.
\\[1.0ex]
{\em Case 1}. Assume that 
$\langle \siga,R\rangle, \langle \sigb,R\rangle\geq 2$.
Each occurrence of at least two conditions
$R-r^i,R-r^j\in\innt\sigma\dual$ enforces $a=0$.
Using the numbering of the beginning of (\ref{degT1}), the 
remaining cases are
\\[0.5ex]
(ii) where this does not happen at all
yielding $\TT(-R)=N_\kk$, and
\\[0.5ex]
(iii) where $(k\cdot r^i)-r^i=(k-1)\cdot r^i\in\innt\sigma\dual$
leads to the single condition $\langle a, r^i\rangle =0$. 
There we are left with a one-dimensional
$\TT(-R)=(r^i)^\bot\subset N_\kk$.
\\[1.0ex]
{\em Case 2}. Assume that
$\langle \siga,R\rangle=1$ and $\langle \sigb,R\rangle\geq 2$.
Then, 
either 
$\langle \sigb,R\rangle > \langle\sigb,r^1\rangle=n$, i.e.\
$R-r^1\in\innt\sigma\dual$ 
implying the condition
$\langle a, r^1\rangle =0$ forcing $a\in N/\siga\Z$ to become $0$,
or, using the numbering of (\ref{degT1}) again,
\\[0.5ex]
(i) $R=r^2$ with $\TT(-R)=N_\kk/\kk\cdot\siga$.
\\[0.5ex]
The case $\langle \siga,R\rangle\geq 2$, $\langle \sigb,R\rangle=1$
(yielding $R=r^{e-1}$) works similar.

\subsection{The construction of $X_\xi\setminus 0$}
\label{constX}
Let $\xi\in\gH^1(S\setminus 0,\theta_S)$ be given by the
$1$-\v{C}ech cocycle 
$\xi=x^{-R}\partial_a\in \kG(\kT,\theta_S|_\kT)=\kk[M]\otimes N$,
cf.\ (\ref{cechTheta}). The associated infinitesimal
deformation $X_\xi\setminus 0$ of $S\setminus 0$ arises from glueing
the trivial pieces $\toric(\siga)\otimes\kk[\keps]$ 
and $\toric(\sigb)\otimes\kk[\keps]$ along
the $\kk[\keps]$-algebra map
$$
\xymatrix@R=2.6ex@C=3.5em{
\kk[{\siga}\dual\cap M,\;\keps] \ar@{_(->}[d]
&
\kk[{\sigb}\dual\cap M,\;\keps] \ar@{^(->}[d]
\\
\kk[M,\keps] \ar[r]^-{\varphi_\xi}
&
\kk[M,\keps]
}
\vspace{0.5ex}
$$
with $\,\varphi_\xi(x^r):=x^r+\keps\cdot\xi(x^r)=
x^r + \keps\cdot\langle a,r\rangle\cdot x^{r-R}$.
Note that we have decided to use the notation $X_\xi\setminus 0$
even in the case when there is no extension of this to some deformation
$X_\xi$ of the non-punctured $S$.

\subsection{The dualizing sheaf on $X_\xi\setminus 0$}
\label{omegaXPunct}
Let $\xi=x^{-R}\partial_a$ as before.
Since $X_\xi\setminus 0$ is smooth over $\Spec\kk[\keps]$
and since $\omega_{\kk[\keps]}=\kk[\keps]$, it follows from
\cite[p.140]{HartRD} that 
$$
\omega_{X\setminus 0}=\omega_{(X\setminus 0)|\kk[\keps]}
= \Lambda^2\Omega^1_{(X\setminus 0)|\kk[\keps]}.
$$
Choosing a $\Z$-basis $\{\kA,\kB\}$ of $M$,
the local pieces of the latter equal
$$
\textstyle
\omega_{\siga}= \oplus_{\langle \siga,r\rangle\geq 1}\,\kk[\keps]\cdot x^r
\cdot\frac{dx^\kA}{x^\kA}\wedge\frac{dx^\kB}{x^\kB}
\cong
\oplus_{\langle \siga,r\rangle\geq 1}\,\kk[\keps]\cdot x^r\subseteq
\kk[M,\keps]
$$
and similarly for $\omega_\sigb$,
cf.\ \cite[Prop.~8.2.9]{CoxLittleSchenck}.
Note that the isomorphism does, up to sign,
not depend on the choice of $\{\kA,\kB\}$. Now, we determine the
impact of the $\kk[\keps]$-algebra isomorphism
$\varphi_\xi^{[0]}:=\varphi_\xi$ on the glueing $\varphi_\xi^{[1]}$
of the modules 
$\omega_{\siga}|_\kT$ and $\omega_{\sigb}|_\kT$.
Since
$$
\textstyle
\frac{dx^\kA}{x^\kA}\mapsto \frac{d(x^\kA+\keps\langle a,\kA\rangle x^{\kA-R})}
{(x^\kA+\keps\langle a,\kA\rangle x^{\kA-R}} 
=
\frac{dx^\kA}{x^\kA} + \keps \langle a,\kA\rangle\, dx^{-R},
$$
we obtain that $x^r\cdot \frac{dx^\kA}{x^\kA}\wedge\frac{dx^\kB}{x^\kB}$
maps to
$$
\textstyle
\big(x^r\cdot \frac{dx^\kA}{x^\kA}\wedge\frac{dx^\kB}{x^\kB}\big) +
\keps \,x^{r-R}\big(
\langle a,r\rangle \frac{dx^\kA}{x^\kA}\wedge\frac{dx^\kB}{x^\kB}
+ \langle a,\kA\rangle \frac{dx^{-R}}{x^{-R}}\wedge\frac{dx^\kB}{x^\kB}
+ \langle a,\kB\rangle \frac{dx^{\kA}}{x^{\kA}}\wedge\frac{dx^{-R}}{x^{-R}}
\big).
$$
Expressing $R$ within the basis $\{\kA,\kB\}$ 
(and suppressing $\frac{dx^\kA}{x^\kA}\wedge\frac{dx^\kB}{x^\kB}$) 
finally yields
$$
\textstyle
\varphi_\xi^{[1]}:\hspace{1em}
\omega_{\siga}|_\kT\ni \; x^r
\;\longmapsto\;
x^r+\keps\cdot\langle a,r-R\rangle\cdot x^{r-R}
\; \in \omega_{\sigb}|_\kT.
$$
This description enables us to determine the
class $[\omega_{X\setminus 0}]\in
\gH^1(S\setminus 0, \CO_X^*)$. If $x^\kra$ and $x^\krb$ are generators of
$\omega_{\siga}$ and $\omega_{\sigb}$, respectively,
i.e.\ if $\langle \siga,\kra\rangle = \langle \sigb,\krb\rangle=1$,
then $[\omega_{X\setminus 0}]$ is represented by the
$1$-\v{C}ech cocycle 
$\varphi(x^\kra)/x^\krb\in\kG(\kT,\CO_{\kT\otimes\kk[\keps]}^*)$.
It is equal to
$$
\psi_1:=x^{\kra-\krb} + \keps\cdot\langle a,\kra-R\rangle\cdot x^{\kra-\krb-R}.
$$
Similarly, we might consider the 
glueing map $\varphi_\xi^{[\kr]}$ for a reflexive power 
$\omega_{X\setminus 0}^{[\kr]}$
instead of just for $\omega_{X\setminus 0}$. 
Then, the previous calculations yield

\begin{lemma}
\label{lem-cocycle}
$
\textstyle
\,\varphi_\xi^{[\kr]}:
x^r
\mapsto
x^r+\keps\cdot\langle a,r-\kr R\rangle\cdot x^{r- R},
$
\vspace{-0.5ex}
and the $1$-\v{C}ech cocycle becomes
$$
\psi_\kr:= x^{\kra(\kr)-\krb(\kr)} + 
\keps\cdot\langle a,\kra(\kr)-\kr R\rangle\cdot 
x^{\kra(\kr)-\krb(\kr)-R} \in \kG(\kT,\CO_{\kT\otimes\kk[\keps]}^*)
\vspace{-0.5ex}
$$
with $\kra(\kr),\krb(\kr)\in M$ satisfying
$\langle \siga,\kra(\kr)\rangle = \langle \sigb,\krb(\kr)\rangle=\kr$.
\end{lemma}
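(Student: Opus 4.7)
The plan is to reduce the claim to the special case $\kr=1$ that was already established in (\ref{omegaXPunct}), by taking $\kr$-th tensor powers. Since $S\setminus 0$ is smooth, the reflexive power $\omega_{X\setminus 0}^{[\kr]}$ coincides on the overlap $\kT$ with the honest tensor power $\omega_{X\setminus 0}^{\otimes\kr}$, so the glueing $\varphi_\xi^{[\kr]}$ is the $\kr$-fold tensor product of the glueing $\varphi_\xi^{[1]}$. Under the trivialization identifying $\omega_\siga^{[\kr]}$ with $\bigoplus_{\langle\siga,r\rangle\geq \kr}\kk[\keps]\cdot x^r$ (extending the $\kr=1$ case verbatim), the computation of $\varphi_\xi^{[\kr]}(x^r)$ splits into the effect of $\varphi_\xi$ on the monomial $x^r$ and its effect on the $\kr$-th power of the volume form.

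The main step is to extract the scalar factor by which $\varphi_\xi$ acts on the volume form alone. Revisiting the formulas of (\ref{omegaXPunct}), one reads off
$$
\varphi_\xi\bigl(\tfrac{dx^\kA}{x^\kA}\wedge\tfrac{dx^\kB}{x^\kB}\bigr)
=\bigl(1-\keps\,\langle a,R\rangle\,x^{-R}\bigr)\cdot\tfrac{dx^\kA}{x^\kA}\wedge\tfrac{dx^\kB}{x^\kB},
$$
which is independent of the choice of basis $\{\kA,\kB\}$ (as expected, since the identification with $\bigoplus\kk[\keps]\cdot x^r$ is basis-independent up to sign). Raising this scalar to the $\kr$-th power modulo $\keps^2$ produces the factor $1-\kr\keps\,\langle a,R\rangle\,x^{-R}$. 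Multiplying by $\varphi_\xi(x^r)=x^r+\keps\,\langle a,r\rangle\,x^{r-R}$ and truncating modulo $\keps^2$ yields the coefficient $\langle a,r\rangle-\kr\langle a,R\rangle=\langle a,r-\kr R\rangle$, which gives the first displayed formula.

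For the cocycle, one simply applies $\varphi_\xi^{[\kr]}$ to the generator $x^{\kra(\kr)}$ of $\omega_\siga^{[\kr]}|_\kT$ and expresses the result as a $\kk[\keps]$-multiple of the generator $x^{\krb(\kr)}$ of $\omega_\sigb^{[\kr]}|_\kT$, obtaining
$$
\varphi_\xi^{[\kr]}\bigl(x^{\kra(\kr)}\bigr)= x^{\krb(\kr)}\cdot\bigl(x^{\kra(\kr)-\krb(\kr)}+\keps\,\langle a,\kra(\kr)-\kr R\rangle\,x^{\kra(\kr)-\krb(\kr)-R}\bigr),
$$
exactly as claimed. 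The only subtle point is checking that the tensor-power operation commutes with the reflexive completion that defines $\omega^{[\kr]}$; this is immediate here because the cocycle lives on the smooth punctured surface $S\setminus 0$, where all sheaves in sight are locally free and no reflexive-hull ambiguity intervenes.
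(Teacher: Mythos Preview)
Your proof is correct and matches the argument the paper leaves implicit: the text simply states that ``the previous calculations yield'' the lemma, and the natural way to read this is precisely your tensor-power extension of the $\kr=1$ computation from (\ref{omegaXPunct}). Your extraction of the scalar factor $1-\keps\langle a,R\rangle x^{-R}$ on the volume form (obtained by setting $r=0$ in the displayed formula of (\ref{omegaXPunct}) and expanding $\frac{dx^{-R}}{x^{-R}}$ in the basis $\{\kA,\kB\}$) is exactly the missing step, and raising it to the $\kr$-th power modulo $\keps^2$ gives the claimed coefficient $\langle a,r-\kr R\rangle$.
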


Note that one might take, 
if some $\kra=\kra(1)$ and $\krb=\krb(1)$ are available,
the multiples $\kra(\kr)=\kr\cdot\kra$ and
$\krb(\kr)=\kr\cdot\krb$ for a general $\kr\in\Z$.
However, in (\ref{emailVersion}) we will prefer a 
different choice for $\kr=\km$.

\subsection{Extending functions along codimension two}
\label{extCodim2}
Let $S=\toric(\sigma)$ be as before.
Since it is normal, it carries the Hartogs property $S_2$ 
as it was asked for in (\ref{QG.V.defn}).
Now, if $A$ is an Artinian $\kk$-algebra 
and $X$ is a deformation of $S$ over $A$
(we just need the case $A=\kk[\keps]$ here),
we would like to keep this property. 

\begin{lemma}
\label{lem-reflexive}
If $\CF$ is a reflexive $\CO_X$-module, i.e.\
if $\CF=\lHom_{\CO_X}(\CG,\CO_X)$ for some $\CO_X$-module $\CG$, then
$\,\kG(S,\CF)\to\kG(S\setminus 0,\,\CF)$ is an isomorphism.
\end{lemma}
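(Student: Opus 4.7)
The plan is to reformulate the claim, via a standard adjunction, as an $S_2$-type vanishing for $\CO_X$ at the origin, and then to establish that vanishing by induction on the length of $A$.

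Let $j: U := S\setminus 0 \hookrightarrow X$ denote the open immersion; recall that $X$ and $S$ share the same underlying topological space since $A$ is Artinian. Because sheaf $\lHom$ is local and the $(j^*,j_*)$-adjunction is natural, one has
\[
\kG(S,\CF) = \Hom_{\CO_X}(\CG,\CO_X)
\quad\text{and}\quad
\kG(U,\CF) = \Hom_{\CO_U}(\CG|_U,\CO_U) = \Hom_{\CO_X}(\CG,\, j_*\CO_U),
\]
and the restriction map of the lemma is precisely the one induced by the adjunction unit $\CO_X \to j_*\CO_U$. It therefore suffices to show that this unit is an isomorphism, i.e.\ that $\kG(W,\CO_X) \to \kG(W\setminus 0,\CO_X)$ is an isomorphism for every open $W \ni 0$. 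Since $S$ is affine and $\CO_X$ is quasicoherent, the local cohomology exact sequence reduces this to the vanishing $H^i_{\{0\}}(\CO_X)=0$ for $i=0,1$, equivalently to $\CO_X$ having depth $\geq 2$ at the closed point.

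To verify this, I would induct on the length of $A$. The base case $A=\kk$ gives $\CO_X=\CO_S$, and the required depth is exactly the $S_2$-property of the normal toric surface $S$ noted at the start of (\ref{extCodim2}). For the inductive step, choose an ideal $I\subseteq A$ with $I\cdot\idm_A=0$ and $\dim_\kk I=1$, set $A':=A/I$ and $X':=X\times_A A'$; flatness of $X$ over $A$ then yields the short exact sequence of $\CO_X$-modules
\[
0 \to \CO_S\otimes_\kk I \to \CO_X \to \CO_{X'} \to 0.
\]
The left-hand term is isomorphic to $\CO_S$ and therefore has $H^i_{\{0\}}=0$ for $i\leq 1$ by the base case, while $\CO_{X'}$ does so by the induction hypothesis. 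The long exact local cohomology sequence then squeezes $H^i_{\{0\}}(\CO_X)$ between zeros for $i=0,1$, finishing the induction.

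I do not anticipate a serious obstacle. The one point worth double-checking is that the adjunction reformulation works for an arbitrary $\CG$, with no finiteness or coherence hypothesis needed; the rest is a routine depth estimate for extensions, and in the case $A=\kk[\keps]$ that is of main interest in the paper, the induction collapses to a single application of the long exact sequence.
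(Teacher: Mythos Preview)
Your proof is correct and follows essentially the same strategy as the paper: reduce to $\CF=\CO_X$ and then induct on the length of $A$ using the short exact sequence $0\to\CO_S\to\CO_X\to\CO_{\bar X}\to 0$ coming from flatness. The only cosmetic differences are that you justify the reduction to $\CO_X$ explicitly via the $(j^*,j_*)$-adjunction (the paper simply asserts it), and you phrase the inductive step as vanishing of $H^i_{\{0\}}(\CO_X)$ for $i\leq 1$ whereas the paper writes out the two rows of global sections and applies the 5-lemma; these are equivalent formulations of the same depth argument.
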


\begin{proof}
It suffices to check this for $\CF=\CO_X$.
We proceed by induction. Choosing a non-trivial element $\keps\in A$
with $\keps\cdot\idm_A=0$, we obtain an exact sequence
of $A$-modules
$$
0 \to \kk \stackrel{\cdot\keps}{\to} A \to \ko{A}\to 0.
$$
Denoting $\ko{X}:=X\otimes_A\ko{A}$, flatness, restriction to $S\setminus 0$,
and taking global sections provides the the following commutative diagram with
exact rows:
$$
\xymatrix@R=2.6ex@C=1.5em{
0 \ar[r] & 
\kG(S,\CO_S) \ar[r]^-{\cdot\keps} \ar[d]^-{\sim}&
\kG(S,\CO_X) \ar[r] \ar[d] &
\kG(S,\CO_{\ko{X}}) \ar[r] \ar[d]^-{\sim}&
0 \ar[d]
\\
0 \ar[r] & 
\kG(S\setminus 0,\,\CO_S) \ar[r]^-{\cdot\keps} &
\kG(S\setminus 0,\,\CO_X) \ar[r] &
\kG(S\setminus 0,\,\CO_{\ko{X}}) \ar[r]^-{0} &
\gH^1(S\setminus 0,\,\CO_S).
}
$$
Now, the claim follows from the 5-lemma.
\end{proof}

\subsection{The dualizing sheaf on $X_\xi$}
\label{omegaX}
In contrast to (\ref{constX}) we now start with a
$$
\xi=x^{-R}\,\partial_a \in
\TT(-R)\subseteq \gH^1(S\setminus 0,\,\theta_S)(-R)
\jrus \kk[M] \otimes_\Z N
$$
yielding a true $X=X_\xi$ and not just a punctured
$X_\xi\setminus 0$.
The extension theorem along two-codimensional subsets
gives us the right tool to understand $\omega_X^{[\kr]}$
out of $\omega_{X\setminus 0}^{[\kr]}$ we have obtained in
(\ref{omegaXPunct}), namely 
$$
\textstyle
\omega^{[\kr]}_{X} = \kG(S\setminus 0,\,\omega^{[\kr]}_{X})
= 
\{f= \sum_{\siga\geq \kr} (c_r+\keps d_r) \,x^r\kst
\varphi_\xi^{[\kr]}(f)\in \oplus_{\sigb\geq \kr} \,\kk[\keps]\cdot x^r\}.
$$
From Lemma \ref{lem-cocycle} we know that
$$
\textstyle
\varphi_\xi^{[\kr]}\big(\sum_{r} (c_r+\keps d_r) x^r\big) =
\sum_{r} c_r\,x^r + \keps\cdot\big(
\sum_r d_r\,x^r + c_r \langle a,r-\kr R\rangle\cdot x^{r-R}\big),
$$
and the combination of these two statements yields

\begin{lemma}
\label{lem-omegaXg}
The sum $f= \sum_{r\in M} (c_r+\keps d_r)\, x^r$ belongs to
$\omega^{[\kr]}_{X}$ if and only if
the following two assertions hold
\begin{enumerate}
\item[(i)]
$\langle\siga,r\rangle<\kr$ implies $c_r=d_r=0$
\item[(ii)]
$\langle\sigb,r\rangle<\kr$ implies $c_r=0$
and $d_r=c_{r+R}\cdot\big\langle a,\,(\kr-1)R-r\big\rangle$.
\end{enumerate}
\end{lemma}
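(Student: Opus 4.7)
The plan is to directly unpack the formula for $\omega^{[\kr]}_{X}$ stated just before the lemma, using the cocycle $\varphi_\xi^{[\kr]}(x^s) = x^s + \keps\, \langle a,\, s - \kr R\rangle\, x^{s-R}$ from Lemma \ref{lem-cocycle}. That formula bundles two separate requirements on $f = \sum_r (c_r + \keps d_r)\, x^r$: (a) the support of $f$ already lies on the $\siga$-side, and (b) after applying the glueing isomorphism $\varphi_\xi^{[\kr]}$, the result lies on the $\sigb$-side. The proof is a translation of each of (a) and (b) into scalar equations on the $c_r, d_r$.

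Condition (a) is tautological: it says exactly $c_r = d_r = 0$ whenever $\langle\siga, r\rangle < \kr$, which is (i). For (b), I would apply $\varphi_\xi^{[\kr]}$ termwise and re-collect by monomial. The coefficient of $x^r$ in $\varphi_\xi^{[\kr]}(f)$ then acquires a constant part $c_r$ together with a $\keps$-part $d_r + c_{r+R}\,\langle a,\, r-(\kr-1)R\rangle$; the second summand arises because $\varphi_\xi^{[\kr]}(x^{r+R})$ contains the term $\keps\,\langle a,\, r - (\kr-1)R\rangle\, x^r$. Demanding that both parts vanish whenever $\langle\sigb, r\rangle < \kr$ yields $c_r = 0$ and
\[
d_r \;=\; -c_{r+R}\,\langle a,\, r - (\kr-1)R\rangle \;=\; c_{r+R}\,\langle a,\, (\kr-1)R - r\rangle,
\]
which is (ii).

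The argument is pure bookkeeping — Lemma \ref{lem-cocycle} does the heavy lifting — so I expect no real conceptual obstacle. The one step demanding care is the index shift when re-collecting by monomial: the $\keps$-contribution to $x^r$ comes from the summand $c_{r+R}\,x^{r+R}$ of $f$, which is why $c_{r+R}$ (rather than $c_r$) appears in (ii). The converse direction is automatic, since every step of the forward derivation is an equivalence of linear conditions in the $c_r$ and $d_r$; in particular, the joint applicability of (i) and (ii) to a single $r$ (where both $\siga$ and $\sigb$ pair below $\kr$) merely records an additional linear relation that any admissible $f$ must satisfy, and introduces no inconsistency.
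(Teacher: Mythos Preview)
Your argument is correct and follows exactly the approach the paper takes: the paper simply writes ``the combination of these two statements yields'' Lemma~\ref{lem-omegaXg}, referring to the displayed description of $\omega^{[\kr]}_X$ and the expansion of $\varphi_\xi^{[\kr]}$ from Lemma~\ref{lem-cocycle}, and your proposal is precisely that combination written out with the index shift made explicit.
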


\subsection{Surjectivity of the restriction map
$\CR_\xi^{[\kr]}$}
\label{surjOmega}
Recall from (\ref{QG.V.defn}) that one of the characterizations of the
property $\kiso{\kr}$ for $X=X_\xi$ was the surjectivity of the restriction map
$\CR_\xi^{[\kr]}:\omega_X^{[\kr ]}\to\omega_S^{[\kr]}$.
Since this map 
just sends $\keps\mapsto 0$, i.e.\
$$
\textstyle
\CR_\xi^{[\kr]}:\;
\sum_{r} (c_r+\keps d_r) \,x^r \mapsto \sum_{r} c_r \,x^r
$$
and $\omega_S^{[\kr]}=\{
\sum_{\langle\siga,r\rangle, \langle\sigb,r\rangle \geq \kr} c_r\,x^r\}$
by \cite[Prop.~8.2.9]{CoxLittleSchenck},
we can characterize this property by

\begin{lemma}
\label{lem-surjOmega}
$\xi=x^{-R}\,\partial_a$ satisfies $\kiso{\kr}$ 
$\iff$ each $r\in M$ with both
$$
\kr\leq\langle \siga,r\rangle<\kr+\langle \siga,R\rangle
\hspace{0.8em}\mbox{and}\hspace{0.8em}
\kr\leq\langle \sigb,r\rangle<\kr+\langle \sigb,R\rangle
$$
leads to $\langle a, \kr R-r\rangle =0$.
\end{lemma}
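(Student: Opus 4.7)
The plan is to use the characterization of $\kiso{\kr}$ recorded in (\ref{QG.V.defn}), namely its equivalence to the surjectivity of the restriction map $\CR_\xi^{[\kr]}\colon\omega_X^{[\kr]}\to\omega_S^{[\kr]}$, and then to verify this surjectivity one monomial at a time. By $\kT$-equivariance (the entire construction being $M$-graded, with $\keps$ carrying weight $R$), every homogeneous component of $\omega_S^{[\kr]}$ is one-dimensional, generated by some $x^s$ with $\langle\siga,s\rangle,\langle\sigb,s\rangle\geq\kr$. Hence surjectivity of $\CR_\xi^{[\kr]}$ is equivalent to the statement that every such $x^s$ lifts to some $f\in\omega_X^{[\kr]}$.

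By Lemma \ref{lem-omegaXg}, lifting $x^s$ amounts to finding coefficients $(c_r,d_r)_{r\in M}$ with $c_s=1$, $c_r=0$ for $r\neq s$, such that (i) $d_r=0$ whenever $\langle\siga,r\rangle<\kr$, and (ii) $d_r=c_{r+R}\,\langle a,(\kr-1)R-r\rangle$ whenever $\langle\sigb,r\rangle<\kr$. Since $c_s$ is the only non-vanishing $c$-coefficient, the right-hand side in (ii) is zero unless $r+R=s$, in which case it equals $\langle a,(\kr-1)R-(s-R)\rangle=\langle a,\kr R-s\rangle$. Thus every $d_r$ is already pinned down, and all but $d_{s-R}$ are forced to be $0$; the only possible conflict is at $r=s-R$, where (i) and (ii) might simultaneously prescribe different values.

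The conflict between (i) and (ii) at $r=s-R$ arises exactly when $\langle\siga,s-R\rangle<\kr$ and $\langle\sigb,s-R\rangle<\kr$. Combined with the standing inequalities $\langle\siga,s\rangle,\langle\sigb,s\rangle\geq\kr$, this rewrites as
$$
\kr\leq\langle\siga,s\rangle<\kr+\langle\siga,R\rangle
\quad\text{and}\quad
\kr\leq\langle\sigb,s\rangle<\kr+\langle\sigb,R\rangle,
$$
and the required compatibility ``$0=\langle a,\kr R-s\rangle$'' is exactly the condition asserted in the lemma (after renaming $s$ as $r$). When $s$ does not satisfy these two sandwich inequalities, at most one of (i), (ii) applies to $r=s-R$, so $d_{s-R}$ can be consistently defined and a lift exists.

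The only real obstacle is a small piece of bookkeeping: one must verify that for every index $r\neq s-R$ the prescriptions coming from (i) and (ii) are either vacuous or both equal to $0$, so that no additional hidden obstruction to lifting arises. This is immediate from $c_{r+R}=0$ for $r+R\neq s$, after which the lemma drops out without further computation.
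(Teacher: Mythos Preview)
Your argument is correct and follows essentially the same route as the paper's proof: both reduce surjectivity of $\CR_\xi^{[\kr]}$ to the membership criterion of Lemma~\ref{lem-omegaXg}, then observe that the only possible obstruction to lifting a target $x^s$ is the simultaneous constraint from conditions (i) and (ii) at the shifted index $s-R$. The paper compresses this into a single index shift (rewriting (ii) as a statement about $d_{r-R}$ in terms of $c_r$), while you spell out the monomial-by-monomial lifting via $\kT$-equivariance; the content is the same. One minor imprecision: the phrase ``every $d_r$ is already pinned down'' overstates things, since $d_r$ is unconstrained whenever $\langle\siga,r\rangle\geq\kr$ and $\langle\sigb,r\rangle\geq\kr$; but this is harmless, as those coefficients may simply be set to zero.
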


\begin{proof}
The second part of Condition (ii) for $f\in \omega^{[\kr ]}_{X}$
in Lemma \ref{lem-omegaXg}
can be read as
that $\langle\sigb,r-R\rangle<\kr $ implies 
$d_{r-R}=c_{r}\cdot\big\langle a,\,\kr R-r\big\rangle$.
Hence, together with $\langle\siga,r-R\rangle<\kr $, this would
enforce that $c_{r}\cdot\big\langle a,\,\kr R-r\big\rangle=0$.
On the other hand, if 
$\langle\siga,r\rangle, \langle\sigb,r\rangle \geq \kr$,
then $c_r\neq 0$ is allowed in $\omega_S^{[\kr ]}$.
\end{proof}

For given $R\in M$ and $\kr\in\Z$ we define the following zones within $M_\Q$:
$$
Z_{R,\,\kr}:=
\{r\in M_\Q\kst \kr\leq\langle \siga,r\rangle<\kr+\langle \siga,R\rangle
\hspace{0.5em}\mbox{and}\hspace{0.5em}
\kr\leq\langle \sigb,r\rangle<\kr+\langle \sigb,R\rangle\}.
$$
Then, for $\xi=x^{-R}\,\partial_a$, the previous lemma says that 
$$
\kiso{\kr} \;\iff\;
\CR_\xi^{[\kr]} \mbox{ is surjective } \;\iff\;
Z_{R,\,\kr }\cap M\subseteq a^\bot+\kr R.
$$

Actually, up to the point that $X_\xi$ does not make sense otherwise,
we did not use $\xi\in \TT$ so far. This property is
equivalent to $\kiso{0}$, and it will be discussed in (\ref{zonesT1}).

\section{V-deformations}
\label{V-deformations}
Let $\sigma=\langle\siga,\sigb\rangle$ be as before, e.g.\
it can be obtained as the cone $C(I)$ over an
interval with uniform denominators $I=[\frac{g}{\km},\frac{h}{\km}]$ 
as in (\ref{coneInterval}).
\\
In the present section, we will approach the V-deformations 
of $S=\toric(\sigma)$ defined in (\ref{def-V}). Since 
$\ind(\omega_S)=\km$, we will mostly study the property 
$\kiso{\km}$ for a given infinitesimal deformation $\xi=x^{-R}\partial_a$.

\subsection{Shifting the zones}
\label{shiftZones}
Recall from Remark \ref{rem-index} that
$\frac{\sigr+\sigs}{n}=\frac{1}{\km}\cdot\ko{R}\in M_\Q$ 
\vspace{0.5ex}
is the truly canonical (but rational) degree. 
Moreover, depending on $R\in M$ we denote
$$
\textstyle
Z_R := Z_{R,\,0}=
\sigma\dual\cap(R-\innt\sigma\dual) \subseteq M_\Q.
$$
The degrees $R$ we are interested in are always elements of
$\innt\sigma\dual$. In particular, $Z_R$ is then a bounded region --
it is a half-open parallelogram 
having $0$ and $R$ as opposite vertices. While these two vertices belong to
the lattice $M$, the remaining ones usually do not. The relation to
the zones $Z_{R,\,\kr }$ from (\ref{surjOmega}) is
$$
\textstyle
Z_{R,\,\kr }= \frac{\kr}{\km}\,\ko{R} +Z_R 
\hspace{1.5em}\mbox{for every}\hspace{0.5em}
\kr\in\Z.
$$
In particular, $Z_{R,\,\kr + \km}= \ko{R} + Z_{R,\,\kr }$, 
i.e.\ the zones $Z_{R,\,\kr +\Z \km}$ just differ by integral translation. 
This gives rise to define the ``stable'' condition
$$
\textstyle
\kISO{\kr}  := \bigcap_{\ell\in\Z} \hspace{0.2em} \kiso{{\kr +\ell \km}}
\hspace{1.5em}\mbox{(still being a condition for $\xi=x^{-R}\partial_a$).}
$$
That is, the condition that $\kiso{\kr}$ is true for all $\kr\in\Z$ can be
replaced by the finite one asking for $\kISO{\kr}$ for all $\kr\in\Z/\km\Z$.
Moreover, to be a V-deformation in the sense of (\ref{def-V}) means
to fulfill the condition $\kISO{0}$.

\begin{proposition}
\label{prop-shift}
{\rm 1)} If $Z_{R,\,\kr}\cap M =\emptyset$, then $\kISO{\kr}$ is fulfilled
for each $a\in N$.
\\[0.5ex]
{\rm 2)} Assume that $Z_{R,\,\kr }\cap M\neq\emptyset$.
Then the following conditions are equivalent:
\vspace{-0.5ex}
$$
\kISO{\kr}
\;\iff\; \kiso{{\kr +\ell \km}} \mbox{\rm\ for two different } \ell\in\Z
\;\iff\; \kiso{\kr } \mbox{\rm\ and } a\in(\ko{R}-\km R)^\bot.
$$
\end{proposition}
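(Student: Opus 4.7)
The plan is to reduce everything to the criterion of Lemma~\ref{lem-surjOmega} via the translation identity $Z_{R,\,\kr+\km}=\ko{R}+Z_{R,\,\kr}$ recorded in (\ref{shiftZones}). Iterating it, and using that $\ko{R}\in M$, gives
\[
Z_{R,\,\kr+\ell\km}\cap M \;=\; \ell\,\ko{R}+\bigl(Z_{R,\,\kr}\cap M\bigr)
\qquad\text{for every }\ell\in\Z,
\]
so all the relevant zones simultaneously meet or miss the lattice $M$. This instantly yields part {\rm 1)}: if $Z_{R,\,\kr}\cap M=\emptyset$, then Lemma~\ref{lem-surjOmega} makes each $\kiso{\kr+\ell\km}$ vacuously true for every $a\in N$, hence $\kISO{\kr}$ holds.

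For part {\rm 2)}, I would substitute $r'=r+\ell\,\ko{R}$ with $r\in Z_{R,\,\kr}\cap M$ into the criterion of Lemma~\ref{lem-surjOmega} to rewrite $\kiso{\kr+\ell\km}$ as the identity
\[
(\ast)_\ell\colon\quad
\langle a,\,\kr R-r\rangle \;=\; \ell\cdot\langle a,\,\ko{R}-\km R\rangle
\qquad\text{for all }r\in Z_{R,\,\kr}\cap M.
\]
The key observation is that the right-hand side is a constant depending only on $\ell$, so $(\ast)_\ell$ forces the function $r\mapsto\langle a,\kr R-r\rangle$ to be constant on $Z_{R,\,\kr}\cap M$ with a value that is \emph{affine linear} in $\ell$. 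From here the three equivalent conditions fall out by a one-variable linear algebra argument.

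The implication $\kISO{\kr}\Rightarrow$ ``validity for two different $\ell$'' is tautological. Conversely, given $(\ast)_{\ell_1}$ and $(\ast)_{\ell_2}$ with $\ell_1\neq\ell_2$, the assumption $Z_{R,\,\kr}\cap M\neq\emptyset$ lets me evaluate both equations at a common lattice point and subtract, forcing $\langle a,\,\ko{R}-\km R\rangle=0$; plugging this back into either equation collapses it to $\kiso{\kr}$. Finally, if $\kiso{\kr}$ holds and $a\in(\ko{R}-\km R)^\bot$, then both sides of $(\ast)_\ell$ vanish identically, so $(\ast)_\ell$ holds for every $\ell\in\Z$ and $\kISO{\kr}$ follows. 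I do not expect real obstacles; the only point deserving attention is that the non-emptiness hypothesis in part {\rm 2)} is precisely what allows the subtraction step to extract $a\perp(\ko{R}-\km R)$, which is why parts {\rm 1)} and {\rm 2)} must be separated.
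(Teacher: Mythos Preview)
Your proof is correct and follows essentially the same approach as the paper: both use the translation identity $Z_{R,\,\kr+\ell\km}\cap M=\ell\,\ko{R}+(Z_{R,\,\kr}\cap M)$ and then compare the conditions $\kiso{\kr}$ and $\kiso{\kr+\ell\km}$ via Lemma~\ref{lem-surjOmega}, observing that they differ exactly by $\ell\cdot\langle a,\ko{R}-\km R\rangle$. Your write-up is simply more explicit in unpacking the three-way equivalence of part~2), whereas the paper compresses this into computing the difference of the two pairing conditions and leaves the remaining implications to the reader.
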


\begin{proof}
Let $\ell\in\Z$. If $r\in Z_{R,\,\kr}\cap M$, then
$r+\ell\, \ko{R}\in Z_{R,\,\kr +\ell \km}$. Hence, the conditions
$\kiso{\kr}$ and $\kiso{{\kr +\ell \km}}$ mean that
$$
\langle a,\,\kr R-r\rangle =0 
\hspace{1em}\mbox{and}\hspace{1em}
\langle a,\,(\kr +\ell \km)R-(r+\ell \ko{R})\rangle =0,
$$
respectively.
However, the difference of the two left hand sides equals
$\langle a, \,-\ell \km R +\ell \ko{R}\rangle = 
\ell\cdot\langle a,\,\ko{R}-\km R\rangle$.
\end{proof}

\begin{corollary}
\label{cor-shift}
{\rm 1)} 
If $\,\xi=x^{-R}\partial_a$ is a V-deformation, then
$a\in(\ko{R}-\km R)^\bot$.
{\rm (See the upcoming Corollary \ref{cor-nqG} for a stronger statement.)}
\\[0.5ex]
{\rm 2)} Assume that $a\in(\ko{R}-\km R)^\bot\setminus\{0\}$. 
Then, for any $\kr\in\Z$, $\kiso{\kr}$ {\rm (}or even $\kISO{\kr} ${\rm)} 
is equivalent to $(Z_{R,\,\kr}\cap M)-\kr R\subseteq\Q\cdot(\ko{R}-\km R)$.
\vspace{0.5ex}
Likewise, this condition is equivalent to
$(Z_{R,\,\kr}\cap M)-\frac{\kr}{\km}\,\ko{R}\subseteq\Q\cdot(\ko{R}-\km R)$.
\end{corollary}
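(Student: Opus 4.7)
My approach is to combine Lemma~\ref{lem-surjOmega}, which rewrites $\kiso{\kr}$ as the concrete linear condition $(Z_{R,\kr}\cap M)-\kr R\subseteq a^\bot$, with Proposition~\ref{prop-shift}(2), plus one elementary combinatorial observation: $0\in Z_{R,0}\cap M$. This last is immediate since $R\in\innt\sigma\dual$ forces both $\langle\siga,R\rangle$ and $\langle\sigb,R\rangle$ to be strictly positive, so $r=0$ meets the defining inequalities of $Z_{R,0}$.

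\textbf{Part 1.} A V-deformation is by definition one satisfying $\kiso{\kr}$ for every $\kr$ divisible by $\km=\ind(\omega_S)$, which is precisely the condition $\kISO{0}$. Since $0\in Z_{R,0}\cap M\neq\emptyset$, the equivalence of Proposition~\ref{prop-shift}(2) applies and yields immediately $a\in(\ko{R}-\km R)^\bot$.

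\textbf{Part 2.} First I would record that $\ko{R}-\km R\neq 0$: indeed, $\ko{R}$ is primitive in $M$ by construction, whereas $\km R$ lies in $\km\cdot M$, so $\km R=\ko{R}$ would force $\km\mid 1$, contradicting $\km=\ind(\omega_S)>1$ (Remark~\ref{rem-index}). Since $a\in N\setminus\{0\}$, the subspace $a^\bot\subseteq M_\Q$ is one-dimensional, and the hypothesis $a\in(\ko{R}-\km R)^\bot$ then forces the equality $a^\bot=\Q\cdot(\ko{R}-\km R)$. The first equivalence follows directly from Lemma~\ref{lem-surjOmega}. The additional claim that $\kISO{\kr}$ is likewise equivalent to the same inclusion is obtained from Proposition~\ref{prop-shift}(2): the only extra condition appearing there, $a\in(\ko{R}-\km R)^\bot$, is precisely our standing hypothesis, and the empty-zone case is vacuous on both sides. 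Finally, replacing $\kr R$ by $\tfrac{\kr}{\km}\ko{R}$ in the inclusion is harmless because their difference is $\tfrac{\kr}{\km}(\ko{R}-\km R)\in\Q\cdot(\ko{R}-\km R)$, so translating $(Z_{R,\kr}\cap M)-\kr R$ by this vector does not affect containment in the line $\Q\cdot(\ko{R}-\km R)$.

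\textbf{Obstacle.} There is essentially no difficult step; the only point that requires care is the nonvanishing $\ko{R}-\km R\neq 0$, which rests on the primitivity of $\ko{R}$ together with the standing assumption $\km>1$. Everything else is bookkeeping around the shift relation $Z_{R,\kr+\km}=\ko{R}+Z_{R,\kr}$ already built into Proposition~\ref{prop-shift}.
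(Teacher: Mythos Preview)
Your proof is correct and follows essentially the same approach as the paper: both use $0\in Z_{R,0}\cap M$ together with Proposition~\ref{prop-shift}(2) for Part~1, and for Part~2 both identify $a^\bot=\Q\cdot(\ko{R}-\km R)$ via the nonvanishing of $\ko{R}-\km R$ and then invoke Lemma~\ref{lem-surjOmega}. Your justification of $\ko{R}-\km R\neq 0$ via primitivity and $\km>1$ is slightly more explicit than the paper's one-line remark, but the content is the same.
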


\begin{proof}
(1) follows from the fact that there are non-empty $(Z_{R,\,\ell\km}\cap M)$
whenever $R\in\innt\sigma\dual$ (and only those $R$ matter for
$\TT(-R)\neq 0$): Just take $\ell=0$.
\\[0.5ex]
(2) $\kiso{\kr}$ means that for each $r\in Z_{R,\,\kr }\cap M$ we have
$a\in (r-\kr R)^\bot$. Together with $a\in(\ko{R}-\km R)^\bot$ this means that
$a$ can be non-trivial if and only if
both $r-\kr R$ and $\ko{R}-\km R$ are collinear.
Moreover, $\ko{R}-\km R$ does never vanish (since $\toric(\sigma)\neq A_k$).
\end{proof}

\subsection{Focusing on $\TT$-degrees}
\label{zonesT1}
For investigating the $\kiso{\kr}$ property we did not use yet that
the set of degrees $R\in M$ with $\TT(-R)\neq 0$ is very restricted.
Taking this into account implies

\begin{lemma}
\label{lem-deg0}
Every deformation $x^{-R}\partial_a\in \TT(-R)$ satisfies $\kiso{0}$.
\end{lemma}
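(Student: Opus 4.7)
The plan is to translate both sides --- the condition $\kiso{0}$ and the membership in $\TT(-R)$ --- into vanishing conditions attached to the half-open parallelogram $Z_R=\sigma\dual\cap(R-\innt\sigma\dual)$, and then to bridge them via the Hilbert basis of $\sigma\dual\cap M$.

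First I would apply Lemma \ref{lem-surjOmega} with $\kr=0$, so that $\kr R-r=-r$: it says that $\kiso{0}$ for $\xi=x^{-R}\partial_a$ amounts to
\[
\langle a,r\rangle=0\qquad\text{for every } r\in Z_R\cap M.
\]
On the other side, the description of the kernel in (\ref{cechKer}) reads: $\xi\in\TT(-R)$ precisely when $\langle a,r^i\rangle=0$ for every Hilbert-basis element $r^i$ with $R-r^i\in\innt\sigma\dual$, i.e.\ for every $r^i\in Z_R$.

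The key step, and the only one that requires any argument, is to promote the vanishing from Hilbert-basis elements of $Z_R$ to arbitrary lattice points in $Z_R$. Given $r\in Z_R\cap M\subseteq \sigma\dual\cap M$, I would pick any Hilbert-basis decomposition $r=\sum_i c_i r^i$ with $c_i\in\Z_{\geq 0}$. Because $\langle\siga,\cdot\rangle$ and $\langle\sigb,\cdot\rangle$ are non-negative on $\sigma\dual$, each summand with $c_i>0$ obeys
\[
\langle\siga,r^i\rangle\leq\langle\siga,r\rangle<\langle\siga,R\rangle
\]
and the analogous bound for $\sigb$, so $r^i\in Z_R$. Therefore $\langle a,r^i\rangle=0$ by the $\TT$-condition, and summing yields $\langle a,r\rangle=\sum_i c_i\langle a,r^i\rangle=0$, which is exactly $\kiso{0}$.

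The main point I would watch is the boundary case (i) of (\ref{degT1}), where $R=r^2$ or $R=r^{e-1}$ and the class $a$ lies in a quotient $N_\kk/\kk\cdot\siga$ (resp.\ $N_\kk/\kk\cdot\sigb$), so that $\langle a,\,\cdot\,\rangle$ is only partially defined. In that case, however, $R$ itself is a Hilbert-basis element and hence irreducible in $\sigma\dual\cap M$, so any nonzero $r\in Z_R\cap M$ would give the forbidden decomposition $R=r+(R-r)$ with both summands in $\sigma\dual\cap M\setminus\{0\}$. Thus $Z_R\cap M=\{0\}$ in these cases, $\kiso{0}$ holds for the trivial reason, and the well-definedness ambiguity never enters.
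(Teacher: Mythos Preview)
Your proof is correct and shares the paper's starting point --- both rewrite $\kiso{0}$ as $Z_R\cap M\subseteq a^\bot$ via Lemma~\ref{lem-surjOmega} --- but the argument diverges from there. The paper proceeds by explicit case distinction over the three types of $\TT$-degrees listed in (\ref{degT1}): for $R=r^i$ irreducible it observes directly that $Z_R\cap M=\{0\}$, and for $R=k\cdot r^i$ it computes $Z_R\cap M=\{0,r^i,\ldots,(k-1)r^i\}$ and then invokes the specific condition $\langle a,r^i\rangle=0$ from (\ref{cechKer}). You instead give a single uniform argument: any $r\in Z_R\cap M$ admits a Hilbert-basis decomposition whose summands, by the monotonicity of $\langle\siga,\cdot\rangle$ and $\langle\sigb,\cdot\rangle$, still lie in $Z_R$, so the $\TT$-condition $a\in(E\cap Z_R)^\bot$ propagates to all of $Z_R\cap M$. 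This is cleaner in that it does not require knowing what $Z_R\cap M$ actually looks like, and it is essentially the viewpoint the paper alludes to only afterwards in the Remark following the proof (the identification $E^R_\siga\cap E^R_\sigb=Z_R\cap E$ from \cite{flip}). Your separate treatment of case~(i) is slightly overcautious --- the cocycle $x^{-R}\partial_a$ carries a genuine $a\in N_\kk$, so $\langle a,r\rangle$ is always defined --- but the irreducibility argument you give there is correct and matches the paper's reasoning for that case.
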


\begin{proof}
Actually, this statement is trivial -- the condition 
$\kiso{0}$ means that $\omega_X^{[0]}=\CO_X$ is flat over $k[\keps]$, 
i.e.\ it even
characterizes the elements of $\TT(-R)$. Nevertheless, 
e.g.\ to practice our new language involving the zones $Z_R$,
we would like to present a direct argument, too:
\\[1ex]
Condition $\kiso{0}$ means $Z_R\cap M\subseteq a^\bot$
with $Z_R=\sigma\dual\cap(R-\sigma\dual)$.
According to (\ref{degT1}), we distinguish between two cases:
\\[0.5ex]
(i)+(ii) $\,R=r^i$ with $i=2,\ldots,e-1$:
Since these elements are irreducible in the semigroup $\sigma\dual\cap M$,
we obtain $Z_R\cap M=\{0\}$, and this belongs to every $a^\bot$.
\\[0.5ex]
(iii) $\,R=k\cdot r^i$ for $i=2,\ldots,e-1$ with $2\leq k\leq a_i-1$:
Here we have
$$
Z_R\cap M=\{0,r^i,\ldots,(k-1)r^i\},
$$
i.e.\ Condition $\kiso{0}$ means $\langle a,r^i\rangle =0$.
However, by (\ref{cechKer}), Case 1,
exactly this is ensured to hold true within
$\TT(-k\cdot r^i)$.
\end{proof}

{\em Remark.}
Actually, the condition 
$$
\kiso{0} \;\iff\; 
Z_R\cap M\subseteq a^\bot \;\iff\; 
a\in (Z_R\cap M)^\bot
$$ 
should be understood as an alternative description of $\TT(-R)$.
However, this is not new -- it coincides with the description in
\cite[(2.2)]{flip}. There, one has defined the finite subsets
$$
E_\siga^R:=\{r\in E\kst \langle\siga, r\rangle < \langle\siga, R\rangle\}
\hspace{1em}\mbox{and}\hspace{1em}
E_\sigb^R:=\{r\in E\kst \langle\sigb, r\rangle < \langle\sigb, R\rangle\}
$$
of $M$, and this lead to an exact sequence
$$
0 \to \TT(-R) \to\big(\spann_k E_\siga^R\cap \spann_k E_\sigb^R\big)^*
\to \spann_k(E_\siga^R\cap E_\sigb^R)^* \to 0.
$$
In particular, $\TT(-R)$ is a subquotient of $N_k$, and 
$a\in \TT(-R)$ if and only if $a\in(E_\siga^R\cap E_\sigb^R)^\bot$.
Now, the relation to our condition $\kiso{0}$ is
that $E_\siga^R\cap E_\sigb^R=Z_R\cap E$.

\begin{corollary}
\label{cor-nqG}
$\xi=x^{-R}\partial_a\in \TT(-R)$ is a V-deformation,
i.e.\ it fulfills the stable condition $\kISO{0}$, if and only if
$a\in (\ko{R}-\km R)^\bot$.
\end{corollary}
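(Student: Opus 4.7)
The corollary packages together three results already established in this section, so the approach is purely combinatorial bookkeeping of the conditions $\kiso{\kr}$ and $\kISO{\kr}$.

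For the forward direction (``V-deformation $\Rightarrow a\in(\ko{R}-\km R)^\bot$''), I would simply invoke Corollary \ref{cor-shift}(1). Indeed, since $0\in Z_{R,0}\cap M$ for any $R\in\innt\sigma\dual$, part (2) of Proposition \ref{prop-shift} applies at $\kr=0$; the stability $\kISO{0}$ then demands agreement of $\kiso{0}$ and $\kiso{\km}$, whose difference is controlled by $\langle a,\ko{R}-\km R\rangle$, forcing $a\in(\ko{R}-\km R)^\bot$.

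For the converse, assume $a\in (\ko{R}-\km R)^\bot$. The key observation is that membership of $\xi=x^{-R}\partial_a$ in $\TT(-R)$ already buys us the base condition $\kiso{0}$ for free — this is precisely Lemma \ref{lem-deg0}. I would then distinguish two cases according to Proposition \ref{prop-shift}. If $Z_{R,0}\cap M=\emptyset$, then part (1) of that proposition delivers $\kISO{0}$ immediately, with no constraint on $a$. Otherwise $Z_{R,0}\cap M\neq\emptyset$ and part (2) applies: the third of its equivalent conditions reads ``$\kiso{0}$ holds \emph{and} $a\in(\ko{R}-\km R)^\bot$,'' both of which we have, so $\kISO{0}$ follows. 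In either case $\xi$ is a V-deformation.

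There is no real obstacle here — once the machinery of Lemma \ref{lem-deg0}, Proposition \ref{prop-shift}, and Corollary \ref{cor-shift} is in place, the statement is a formal corollary. The conceptual content worth emphasizing is the clean split: $\xi\in\TT(-R)$ supplies the initial compatibility $\kiso{0}$, and the single linear condition $a\in(\ko{R}-\km R)^\bot$ is exactly what propagates that compatibility through all integer shifts of the form $\kr\mapsto \kr+\km$, yielding the stable condition $\kISO{0}$ that defines a V-deformation.
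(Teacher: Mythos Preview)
Your proof is correct and follows the same route as the paper: the forward implication is Corollary~\ref{cor-shift}(1), and the converse combines Lemma~\ref{lem-deg0} (giving $\kiso{0}$) with Proposition~\ref{prop-shift}(2) to upgrade to $\kISO{0}$. The only superfluous step is your case split in the converse---since you already observed that $0\in Z_{R,0}\cap M$ whenever $R\in\innt\sigma\dual$, the empty case never occurs and you can go straight to part~(2) of Proposition~\ref{prop-shift}.
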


\begin{proof}
This follows from Proposition~\ref{prop-shift}\,(2). The implication
($\then$) was already stated in Corollary \ref{cor-shift}.
The reversed implication ($\neht$) makes use of Lemma \ref{lem-deg0}.
\end{proof}

\subsection{Counting V-deformations}
\label{restrictPhi}
We run through the list (i)-(iii) of (\ref{degT1}) and especially
(\ref{cechKer})
to determine $(\ko{R}-\km R)^\bot=(\sigr+\sigs-nR)^\bot$, 
i.e.\ the V-deformations within each homogeneous summand $\TT(-R)$.
\\[1ex]
(i) $R=r^2$ (and similarly $R=r^{e-1}$):
$\,\TT(-r^2)=N_\kk/\kk\cdot\siga$. 
The element $\sigr+\sigs -n r^2$ 
is contained in ${\siga}^\bot\subset M$,
hence it provides a linear map $N/\Z\cdot\siga\to\Z$ where
$\TTV(-r^2)$ is generated by the kernel. Since we had excluded the
$A_{n-1}$-singularity, this linear map is also non-trivial,
i.e.\ there is no V-deformations in degree $-r^2$ (and $-r^{e-1}$).
\\[1ex]
(ii)
$R=r^i$ for $i=3,\ldots,e-2$: $\; \TT(-r^i)=N_\kk$.
We know that $\sigr+\sigs -n r^i$ is again non-trivial,
hence it provides a one-dimensional kernel within the two-dimensional
$\TT(-r^i)$.
Altogether, this yields an $(e-4)$-dimensional space of V-deformations.
\\[1ex]
(iii)
$R=k\cdot r^i$ for $i=2,\ldots,e-1$ with $2\leq k\leq a_i-1$:
$\; \TT(-kr^i)=(r^i)^\bot\subset N_\kk$.
Here we obtain $\TTV(-kr^i)=(\sigr+\sigs,\, r^i)^\bot$, i.e.\
this is non-trivial if and only if
$\sigr+\sigs\in\N\cdot r^i$, i.e.\ if $\sigma$ is grounded
(see Definition \ref{def-grounded}) and $r^i=\ko{R}$ is the central degree
(i.e.\ $i=\ci$ is the central index).
If this is the case, and if $\sigma$ stems from
an interval $I=[-\Ia,\Ib]$ as in Proposition \ref{prop-etaci},
then we gather another $a_\ci-2=\rounddown{\Ia}+\rounddown{\Ib}\,$ 
\mbox{V-deformations}.

\subsection{Representing $\TTV$ as a kernel}
\label{emailVersion}
An alternative approach to visualize the V-deformations of
$S=\toric(\sigma)$ is to consider the following
map $\Phi:\TT\to \gH^2_0(S,\CO_S)$:
If $\xi\in \TT$ is represented by an infinitesimal deformation 
$X=X_\xi\to\Spec\kk[\keps]$, then there is an exact sequence
$$
\xymatrix@R=0.4ex@C=1.5em{
0 \ar[r] & 
\CO_S \ar[r]^{} & 
\CO_X^* \ar[r] &
\CO_S^* \ar[r] &
1
\\
& f \ar@{|->}[r] &
1+\keps f.
}
$$
Since the map 
$\gH^0(S\setminus 0,\CO_X^*)\to \gH^0(S\setminus 0,\CO_S^*)$
equals $\CO_X^*\surj\CO_S^*$ on the affine $S$, i.e.\ it is 
notably surjective, this implies the exactness of
$$
0 \to \gH^1(S\setminus 0,\CO_S) \to \gH^1(S\setminus 0,\CO_X^*)
\to \gH^1(S\setminus 0,\CO_S^*).
$$
Thus, using that $\omega_S^{[\km]}$ is trivial,
we may define $\Phi(\xi)$ as the class of $\omega_X^{[\km]}$ in 
$$
\xymatrix@R=0.6ex@C=1.5em{
\gH^2_0(S,\CO_S)=\gH^1(S\setminus 0,\CO_S) = \ker\Big(
\hspace{-2em}
&
\gH^1(S\setminus 0,\CO_X^*) \ar[r] &  \gH^1(S\setminus 0,\CO_S^*)& 
\hspace{-3.5em}\Big)
\\
&
\Pic(X_\xi\setminus 0)\ar@{=}[u] \ar[r] 
&  \Pic(S\setminus 0). \ar@{=}[u]
}
$$
Comparing with the flatness part of the
definition at the end of (\ref{QG.V.defn}), it follows that
the kernel $\ker\Phi\subseteq \TT$ consists
exactly of the deformations satisfying $\kiso{\km}$, i.e., of
the V-deformations, cf.~(\ref{def-V}).
\\[1ex]
It turns out that $\Phi$ can be extended to 
$\gH^1(S\setminus 0,\theta_S)$, i.e.\
we consider (locally trivial) deformations
$X_\xi\setminus 0$ of (the smooth) $S\setminus 0$ again.
Using the descriptions
of $\gH^1(S\setminus 0,\theta_S)$ and
$\gH^1(S\setminus 0,\CO_S)$ given in \ref{cechTheta} and \ref{cechO},
respectively, the final result fits perfectly with
Corollary \ref{cor-nqG}:

\begin{proposition}
\label{prop-calcPhi}
Let $R\in\innt\sigma\dual\cap M$. 
Then, the degree $-R$ part of $\Phi$ 
$$
\Phi(-R):\gH^1(S\setminus 0,\theta_S)(-R)\to
\gH^1(S\setminus 0,\CO_S)(-R)
$$
is given by 
$\,x^{-R}\partial_a\mapsto \langle a,\ko{R}-\km R\rangle\cdot x^{-R}$.
In other words, using the natural maps
$N_\kk\surj \gH^1(S\setminus 0,\theta_S)(-R)$ and
$\gH^1(S\setminus 0,\CO_S)(-R)=\kk$, the map 
$\Phi(-R):N_k\to k$ equals $\ko{R}-\km R\in M$.
\end{proposition}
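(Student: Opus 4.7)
The plan is to compute $\Phi(-R)$ directly from the \v{C}ech description of $\omega^{[\km]}_{X\setminus 0}$ provided by Lemma \ref{lem-cocycle}, exploiting the crucial fact that $\omega_S^{[\km]}$ is trivial as $\cO_S$-module (since $\km=\ind(\omega_S)$).

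First I would invoke Remark \ref{rem-index}, which says that the central degree $\ko{R}$ is characterized by $\langle \siga, \ko{R}\rangle = \langle \sigb, \ko{R}\rangle = \km$. This makes $\ko{R}$ an admissible common choice for both of the auxiliary elements $\kra(\km)$ and $\krb(\km)$ called for in Lemma \ref{lem-cocycle}. Substituting $\kra(\km) = \krb(\km) = \ko{R}$, the cocycle representing $\omega^{[\km]}_{X_\xi\setminus 0}$ specializes to
$$
\psi_\km \;=\; 1 + \keps \cdot \langle a,\, \ko{R} - \km R\rangle \cdot x^{-R}
\;\in\; \kG(\kT, \cO^*_{\kT \otimes \kk[\keps]}).
$$
The unperturbed term being exactly $1$ reflects the triviality of $\omega_S^{[\km]}$, i.e.\ the vanishing of the image of $\omega^{[\km]}_{X_\xi\setminus 0}$ in $\Pic(S\setminus 0)$.

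Next I would push $\psi_\km$ through the embedding $\gH^1(S\setminus 0, \cO_S) \hookrightarrow \gH^1(S\setminus 0, \cO_X^*)$ induced by $f \mapsto 1 + \keps f$, the section of the short exact sequence displayed at the start of (\ref{emailVersion}). This identifies $\Phi(\xi)$ with the class of the \v{C}ech 1-cocycle $\langle a, \ko{R} - \km R\rangle \cdot x^{-R}$ in $\gH^1(S\setminus 0, \cO_S)$. By the calculation in (\ref{cechO}), which gives $\gH^1(S\setminus 0, \cO_S)(-R) = \kk$ with $x^{-R}$ as generator, this yields exactly the scalar $\langle a, \ko{R} - \km R\rangle$ predicted by the proposition.

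Given the explicit formula of Lemma \ref{lem-cocycle}, I do not anticipate a genuine obstacle. The one point that must be checked is independence of the resulting class from the choice $\kra(\km) = \krb(\km) = \ko{R}$: any other admissible pair $(s, t)$ satisfies $s - \ko{R} \in \siga^\bot \cap M$ and $t - \ko{R} \in \sigb^\bot \cap M$, so the corresponding monomials $x^{s - \ko{R}}$ and $x^{t - \ko{R}}$ are invertible on $\toric(\siga)$ and $\toric(\sigb)$ respectively. Consequently, switching between admissible choices modifies $\psi_\km$ only by a \v{C}ech coboundary on the cover $\{\toric(\siga),\toric(\sigb)\}$ of $S\setminus 0$, leaving the class in $\gH^1$ unaffected.
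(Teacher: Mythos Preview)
Your proof is correct and follows essentially the same approach as the paper: both choose $\kra(\km)=\krb(\km)=\ko{R}$ in Lemma~\ref{lem-cocycle} to obtain $\psi_\km = 1 + \keps\,\langle a,\ko{R}-\km R\rangle\,x^{-R}$, then read off the class in $\gH^1(S\setminus 0,\CO_S)$ via the identification $f\mapsto 1+\keps f$. Your additional paragraph on independence of the choice of $(\kra(\km),\krb(\km))$ is a welcome sanity check that the paper leaves implicit; the only minor inaccuracy is that the equality $\langle\siga,\ko{R}\rangle=\langle\sigb,\ko{R}\rangle=\km$ is established in the discussion surrounding Proposition~\ref{prop-CentrDeg} rather than in Remark~\ref{rem-index} itself.
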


\begin{proof}
In (\ref{omegaXPunct}) we have dealt with 1-cocycles of $\CO_X^*$,
and in Lemma \ref{lem-cocycle} we have obtained 
an element $\psi_\km$ describing the class of $\omega_X^{[\km]}$
after using the surjection
$\Gamma(\kT,\CO_{\kT\otimes k[\keps]}^*)\surj \gH^1(S\setminus 0,\CO_X^*)$.
Restricting $\psi_\km$ via $\keps\mapsto 0$ to
$\Gamma(\kT,\CO_\kT^*)\surj \gH^1(S\setminus 0,\CO_S^*)$
yields $x^{\kra(\km)-\krb(\km)}$.
\\[0.5ex]
Since $\omega_S^{[\km]}=\CO_S$, this is a $1$-\v{C}ech coboundary.
One can see this directly by the possibility of choosing
$\kra(\km)=\krb(\km)=\ko{R}$ 
-- then $x^{\kra(\km)-\krb(\km)}$ becomes $1$ right away.
Applying this recipe to the original cocycle $\psi_\km$ 
of Lemma \ref{lem-cocycle} as well, we obtain that
$$
\psi_\km = 1 + \keps\cdot\langle a,\,\ko{R}-\km R\rangle\cdot x^{-R}. 
$$
Recall that the second map within the exact sequence
$$
0 \to \CO_{S\setminus 0} \to
\CO_{X\setminus 0}^* \to
\CO_{S\setminus 0}^* \to 1
$$
sends $f\mapsto 1+\keps\cdot f$.
Thus, $\Phi(\xi)=[\omega_{X\setminus 0}^{[\km ]}]\in 
\gH^1(S\setminus 0,\CO_S)(-R)$
is given by the $1$-\v{C}ech cocycle
$\langle a,\,\ko{R}-\km R\rangle\cdot x^{-R}\in \kk[M]=\kG(\kT,\CO_S)$.
\end{proof}

\section{\qG- and \VW-deformations}
\label{compJanos}
V-deformations are understood,
by Corollary \ref{cor-nqG} and (\ref{restrictPhi}).
Next we will turn to the stronger \qG- and 
\VW-deformations,
cf.\ (\ref{def-qG}) and (\ref{def-VW}) for the definition
of these notions.

\subsection{Extending the lattice $M$}
\label{extendM}
We adopt the notation from (\ref{coneInterval}).
For $I=[\frac{g}{\km},\frac{h}{\km}]$ we know that 
$\frac{1}{\km}\ko{R}\in M_\Q$ is the truly canonical, but rational
degree. This gives rise to an enlargement of our
lattice $M$, namely
$$
\textstyle
\til{M}:=M+\Z\cdot \frac{1}{\km}\ko{R}.
$$
Let us assume that $\xi=x^{-R}\partial_a\in \TT(-R)$ is a V-deformation,
i.e.\ $\langle a,\,\ko{R}-\km R\rangle=0$.
Using the zone $Z_R=\sigma\dual\cap(R-\innt\sigma\dual)$
defined in (\ref{shiftZones}), we obtain

\begin{proposition}
\label{prop-WqG}
{\rm 1)} $\xi$ is a \qG-deformation $\iff$ 
$\til{M}\cap Z_R\subseteq \Q\cdot (\ko{R}-\km R)$, and
\\[0.5ex]
{\rm 2)} $\xi$ is a \VW-deformation $\iff$
$(M+\frac{1}{\km}\ko{R})\cap Z_R\subseteq \Q\cdot (\ko{R}-\km R)$.
\end{proposition}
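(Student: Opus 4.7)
The plan is to reduce both parts to the geometric reformulation of $\kISO{\kr}$ provided by Corollary \ref{cor-shift}(2), combined with the shift identity $Z_{R,\kr}=\tfrac{\kr}{\km}\ko{R}+Z_R$ from (\ref{shiftZones}). Since $\xi$ is assumed to be a V-deformation, we already have $a\in(\ko{R}-\km R)^\bot$, so the hypothesis of that corollary is satisfied. For every $\kr\in\Z$ it then rephrases $\kISO{\kr}$ as
$$
\bigl(M-\tfrac{\kr}{\km}\ko{R}\bigr)\cap Z_R\;\subseteq\;\Q\cdot(\ko{R}-\km R),
$$
where I have merely translated the set $Z_{R,\kr}\cap M$ back to a subset of $Z_R$.

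For part (2), a \VW-deformation is V (already in force) together with W, i.e.\ with $\kiso{-1}$; by Proposition \ref{prop-shift}(2) this combination is precisely $\kISO{-1}$. Specializing $\kr=-1$ in the displayed condition and shifting the intersection by $\tfrac{1}{\km}\ko{R}$ rewrites the left-hand side as $(M+\tfrac{1}{\km}\ko{R})\cap Z_R$, which is the asserted criterion.

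For part (1), being a \qG-deformation demands $\kiso{\kr}$ for every $\kr\in\Z$, and the $\km$-periodicity $Z_{R,\kr+\km}=\ko{R}+Z_{R,\kr}$ recorded in (\ref{shiftZones}) makes this equivalent to the conjunction of the conditions $\kISO{\kr}$ for $\kr$ running through $\Z/\km\Z$, hence for all $\kr\in\Z$. Since the common target $\Q\cdot(\ko{R}-\km R)$ does not depend on $\kr$, the simultaneous containment of all the left-hand sides is the containment of their union:
$$
\bigcup_{\kr\in\Z}\bigl(M-\tfrac{\kr}{\km}\ko{R}\bigr)\cap Z_R\;=\;\bigl(M+\Z\cdot\tfrac{1}{\km}\ko{R}\bigr)\cap Z_R\;=\;\til{M}\cap Z_R,
$$
which yields the stated equivalence.

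I do not expect a substantive obstacle, since the heavy lifting has already been done in Proposition \ref{prop-shift} and Corollary \ref{cor-shift}. The only care needed is bookkeeping: keeping straight the two translation versions offered by Corollary \ref{cor-shift}(2) (by $\kr R$ versus $\tfrac{\kr}{\km}\ko{R}$) and checking that the $\kr$-union really collects the full overlattice $\til{M}=M+\Z\cdot\tfrac{1}{\km}\ko{R}$ and not a proper sub- or supermodule of it.
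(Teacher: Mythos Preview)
Your argument is correct and follows the same route as the paper: you invoke Corollary~\ref{cor-shift}(2) to rewrite $\kISO{\kr}$ as $(M-\tfrac{\kr}{\km}\ko{R})\cap Z_R\subseteq\Q\cdot(\ko{R}-\km R)$, then specialize $\kr=-1$ for part~(2) and take the union $\bigcup_{\kr\in\Z}(M-\tfrac{\kr}{\km}\ko{R})=\til{M}$ for part~(1). The paper's proof is terser but identical in substance.
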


Note that the difference between both cases just arises from the 
tiny difference
between $\til{M}=M+\Z\cdot \frac{1}{\km}\ko{R}$ and $M+\frac{1}{\km}\ko{R}$.

\begin{proof}
Recall from (\ref{shiftZones}) that
$Z_{R,\,\kr }= \frac{\kr}{\km}\,\ko{R} +Z_R$ for $\kr\in\Z$.
Thus, Corollary \ref{cor-shift}\,(2) says that 
the conditions $\kiso{\kr}$ and $\kISO{\kr}$ are equivalent to
$$
\textstyle
Z_R\cap (M-\frac{\kr}{\km}\ko{R})=
((Z_{R} + \frac{\kr}{\km}\ko{R})\cap M)- \frac{\kr}{\km}\ko{R}
\subseteq\Q\cdot(\ko{R}-\km R).
$$
While $\kr=-1$ directly leads to (2),
one uses $\bigcup_{\kr\in\Z}(M-\frac{\kr}{\km}\ko{R})=\til{M}$ for (1).
\end{proof}

Now, we are going to scan the degrees of $\TTV$ 
listed in (\ref{restrictPhi})(ii) and (iii) for \qG- and \VW-deformations.
(Note that the deformations in (\ref{restrictPhi})(i) 
are not even V-deformations.)
\\
Actually, it is convenient to proceed with a minor change to the division into
the two cases:
We will shift (and this applies only to the grounded case)
the central degree $\ko{R}=r^\ci$ from Class (ii) to (iii).
Thus, in (ii) we now collect exactly the non-central $R=r^i$ ($i=3,\ldots,e-2$),
and Class (iii) will
gather all $R=k\cdot r^\ci$ with $1\leq k\leq a_\ci-1$.
Note that this set is empty unless $\sigma$ is grounded, i.e.\ $r^\ci=\ko{R}$.

\subsection{The degrees of (\ref{restrictPhi})(ii)}
\label{scanII}
Let $R=r^i$ with $i=3,\ldots,e-2$ be a non-central degree.
The latter property can be expressed by
$$
\textstyle
\frac{1}{\km}\ko{R}\;\notin\;\Q\cdot (\ko{R}-\km r^i).
$$
On the other hand, we know that
$$
\textstyle
\langle\siga,\frac{1}{\km}\ko{R}\rangle=
\langle\sigb,\frac{1}{\km}\ko{R}\rangle=1
\hspace{0.7em}\mbox{and}\hspace{0.7em}
\langle\siga,r^i\rangle, \langle\sigb,r^i\rangle >1
$$
which implies that
$\frac{1}{\km}\ko{R}\,\in\, \sigma\dual\cap(r^i-\innt\sigma\dual) = Z_{r^i}$.
Hence,
$$
\textstyle
\frac{1}{\km}\ko{R}\;\in\; (M+\frac{1}{\km}\ko{R})\cap Z_{r^i}.
$$
Applying 
Proposition \ref{prop-WqG}\,(2),
this shows that
the deformations of degree $r^i$ cannot be \VW-deformations,
let alone \qG-deformations. In other words,
the property of being a grounded singularity is a necessary condition
for the existence of \VW- or \qG-deformations.

\subsection{The degrees of (\ref{restrictPhi})(iii)}
\label{scanIII}
Let $\sigma$ be a grounded cone with central degree $\ko{R}=r^\ci$.
From (\ref{coneInterval}) and (\ref{groundedCones}) 
we know that $\sigma=\langle\siga,\sigb\rangle$ can be obtained as 
$C(I)=\langle (g,\km),\,(h,\km)\rangle$ 
from a grounded interval $I=[\frac{g}{\km},\frac{h}{\km}]=[-\Ia,\Ib]$
with $\km>0$, $\,g<0<h$, and $\gcd(g,\km)=\gcd(h,\km)=1$. 
In particular, $\Ia,\Ib\in\Q_{>0}$, 
the central degree $\ko{R}\in M$ becomes $[0,1]\in\Z^2$,
and $\sigma\dual=\langle [-\km,h],\,[\km,-g]\rangle$.
\\[0.5ex]
Let $R=k\cdot \ko{R}$ with
$k=1,\ldots,(a_\ci-1)=1+\rounddown{\Ia}+\rounddown{\Ib}$
(cf.\ Proposition~\ref{prop-etaci}).
Note that we have included the case $k=1$ 
originally belonging to (\ref{restrictPhi})(ii).
The zone $Z_{k\ko{R}}$ is the half open parallelogram in $M_\Q=\Q^2$ 
with the vertices 
$$
\textstyle
[0,0],\hspace{1.2em}
\frac{k}{h-g}\cdot [\km,-g], \hspace{1.2em}
[0,k], \hspace{1.2em}
\frac{k}{h-g}\cdot [-\km,h].
$$
Moreover, the line $\Q\cdot(\ko{R}-\km R)=\Q\cdot\ko{R}$
we are interested in by Proposition \ref{prop-WqG} is given
by the diagonal $\ko{[0,0]\,[0,k]}$.

\subsubsection{qG-deformations}
\label{IIIqG}
From (\ref{restrictPhi})(iii) we know that each $k=1,\ldots,(a_\ci-1)$
gives rise to a one-dimensional 
$\TTV(-k\cdot\ko{R})=\ko{R}^\bot\subseteq N_k$. For each
of these $k$ we have to decide whether
$\TTqG(-k\cdot\ko{R})=0$ or $\ko{R}^\bot$.

\begin{proposition}
\label{prop-finalQG}
The \qG-deformations of $S$ consist exactly of the
one-dimensional subspaces $\ko{R}^\bot\subseteq \TT(-k\cdot\ko{R})$
with $1\leq k\leq \min\{a_\ci-1,\,|I|\}$.
\end{proposition}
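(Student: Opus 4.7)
The plan is to combine the reduction of \S\ref{scanII} with a direct lattice-point analysis in the interval coordinates of \S\ref{scanIII}, using Proposition~\ref{prop-WqG}(1) as the only serious input. First I would note that, by \S\ref{scanII}, any non-central degree $R=r^i$ from class (\ref{restrictPhi})(ii) supplies $\frac{1}{\km}\ko{R}\in (M+\frac{1}{\km}\ko{R})\cap Z_{r^i}$ off the diagonal $\Q\cdot\ko{R}$, so those degrees admit no \VW-deformations, let alone \qG-deformations. Hence one only needs to consider the central multiples $R=k\cdot\ko{R}$ with $1\leq k\leq a_\ci-1$, and by the V-deformation part in Corollary~\ref{cor-nqG} the direction $a$ must lie in $\ko{R}^\bot$; thus $\TTqG(-k\ko{R})$ is either $0$ or the full one-dimensional space $\ko{R}^\bot$.

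Next I would translate Proposition~\ref{prop-WqG}(1) into coordinates as in \S\ref{scanIII}, with $\ko{R}=[0,1]$, $\til{M}=\Z\times\frac{1}{\km}\Z$, and the line $\Q\cdot(\ko{R}-\km R)=\Q\cdot\ko{R}$ equal to the $y$-axis. The zone $Z_{k\ko{R}}$ is the half-open parallelogram whose $x$-coordinate ranges in $\bigl[-\tfrac{k}{|I|},\tfrac{k}{|I|}\bigr]$, so the \qG-condition becomes: no integer $\ell$ with $|\ell|\geq 1$ and $|\ell|\leq k/|I|$ admits a $y\in\frac{1}{\km}\Z$ making $[\ell,y]\in Z_{k\ko{R}}$.

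The decisive step is to see that this dichotomy is exactly $k\leq|I|$ versus $k>|I|$. If $k\leq|I|$, then the strip $|x|\leq k/|I|$ contains no integer $\ell$ with $|\ell|\geq 1$ in its interior; the only possibility is $|\ell|=1$ with $k=|I|$, in which case the corresponding point lies on the ray $\Q_{\geq 0}\cdot\sigs$ (resp.\ $\Q_{\geq 0}\cdot\sigr$) of $\sigma\dual$, so $R-v$ lies on the boundary of $\sigma\dual$, not in $\innt\sigma\dual$, and $v\notin Z_R$. Conversely if $k>|I|$, I would exhibit the explicit obstruction $v=[1,A]\in\til{M}$: indeed $A=-g/\km\in\frac{1}{\km}\Z$, $v\in\sigma\dual$ lies on the boundary ray through $\sigr$, and $R-v=[-1,k-A]$ satisfies $k-A>B$ by $k>|I|$, placing $R-v$ strictly in $\innt\sigma\dual$; hence $v\in Z_{k\ko{R}}$, yet $v\notin\Q\cdot\ko{R}$.

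Putting these together yields $\TTqG(-k\ko{R})=\ko{R}^\bot$ precisely for $1\leq k\leq\lfloor|I|\rfloor$ intersected with the allowed range $k\leq a_\ci-1$, which is $1\leq k\leq\min\{a_\ci-1,|I|\}$ (the floor being absorbed into the integrality of $k$). The main obstacle I anticipate is the careful open/closed bookkeeping on the boundary of $Z_R$ when $k=|I|$; everything else reduces to elementary geometry of the parallelogram against the refined lattice $\til{M}$.
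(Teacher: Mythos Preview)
Your argument is correct and follows essentially the same route as the paper: both reduce via Proposition~\ref{prop-WqG}(1) to the question of whether $\til{M}\cap Z_{k\ko{R}}$ escapes the line $\Q\cdot\ko{R}$, and both identify the decisive obstruction point as the primitive $\til{M}$-generator $[1,-g/\km]=[1,A]$ of a boundary ray of $\sigma\dual$, checking that it lies in $Z_{k\ko{R}}$ precisely when $k>|I|$. The only presentational difference is that the paper passes through the embedding $\iota=(\siga,\sigb):\til{M}\hookrightarrow\Z^2$ (sending $\frac{1}{\km}\ko{R}$ to $[1,1]$) and argues by successively subtracting $\frac{1}{\km}\ko{R}$ that any off-diagonal $\til{M}$-point in $Z_{k\ko{R}}$ can be pushed down to $\partial\sigma\dual$, whereas you bypass this by observing directly that $\til{M}=\Z\times\frac{1}{\km}\Z$ forces the first coordinate to be an integer and then bounding the $x$-range of the parallelogram by $k/|I|$; both reductions lead to the same boundary check.
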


\begin{proof}
We consider the embedding $\iota:\til{M}\hookrightarrow\Z^2$ 
obtained by evaluating
$(\siga,\sigb)$. Actually, restricting to $M=\Z^2$, this
reflects the original situation of $M=(\Z^2)^G$,
and $\iota|_M$ is given by the matrix
$\Matc{2}{g & \km \\ h & \km}$.
The rational $\iota_\Q$ is an isomorphism, we can detect 
$\iota_\Q(\sigma\dual)=\Q^2_{\geq 0}$,
and the new, truly canonical degree
$\frac{1}{\km}\ko{R}=[0,\frac{1}{\km}]\in\til{M}$ maps to $[1,1]$.
\\[0.5ex]
We are going to apply Proposition \ref{prop-WqG}.
The description by $\iota$ implies that 
$(\til{M}\cap Z_{k\ko{R}})\setminus\Q\ko{R}$ is
non-empty if and only if $Z_{k\ko{R}}$ contains an $\til{M}$-lattice
point on the boundary $\partial\sigma\dual\setminus\{0\}$ --
just subtract $\iota(\frac{1}{\km}\ko{R})=[1,1]$ 
whenever the boundary is not reached yet.
\\[1.0ex]
While $r^1=[\km,-g]$ used to be a primitive generator of 
one ray of $\sigma\dual$ within the lattice $M=\Z^2$, 
this is no longer true in $\til{M}=\Z\times\Z\frac{1}{\km}$.
Here, the element $[1,\frac{-g}{\km}]$ does the job instead. 
Thus, it remains to
check whether this generator belongs to $Z_{k\ko{R}}$.
Since $\langle\siga,[1,\frac{-g}{\km}]\rangle=0$
and $\langle\sigb,[1,\frac{-g}{\km}]\rangle=h-g$,
this leads to the condition $h-g<km$.
The situation for the other ray generated by $r^e=[-\km,h]$ is the same.
\end{proof}

\subsubsection{\VW-deformations}
\label{IIIVW}
Recall from (\ref{normAct}) that $q'\in(\Z/n\Z)^*$ denotes the multiplicative
inverse of $q$. It is, like $q$ itself, assumed to be normalized as
$1\leq q'<n-1$. The singularities $S_{n,q}$ and $S_{n,q'}$ are isomorphic,
by (\ref{abcNotation}) they share $a'=a$ (that is $\km'=\km$) and $b'=b$,
and at the end of (\ref{coneInterval}) we have seen that
$c=-1/g$ and $c'=1/h$ in $(\Z/\km\Z)^*$.
\\[1ex]
As before,  we are in the grounded case, and we consider a
$k\in\{1,\ldots,a_\ci-1\}$
with $a_\ci-1=1+\rounddown{\Ia}+\rounddown{\Ib}$.

\begin{proposition}
\label{prop-finalVW}
$\,\TTVW(-k\cdot\ko{R})\neq 0$
$\;\iff\;$ 
$k\leq \min\{\frac{q+1}{a}, \frac{q'+1}{a}\}=
\min\{c\cdot |I|, \,c'\cdot |I|\}$.
\end{proposition}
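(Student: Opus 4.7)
The plan is to invoke Proposition~\ref{prop-WqG}(2) directly. Since $\TT(-k\ko{R}) = \ko{R}^\bot$ is one-dimensional and the zone condition does not depend on the chosen $a$, this reduces $\TTVW(-k\ko{R}) \neq 0$ to $(M + \tfrac{1}{\km}\ko{R}) \cap Z_{k\ko{R}} \subseteq \Q\ko{R}$. I will then translate to $\Q^2$ via the isomorphism $\iota_\Q$ used in the proof of Proposition~\ref{prop-finalQG}: the zone becomes the half-open square $[0,k\km)^2$, the shifted lattice becomes $L + (1,1)$ with $L = \Z(g,h) + \Z(\km,\km)$, and $\Q\ko{R}$ becomes the diagonal $\{p = q\}$.

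For the necessary direction I would use the computation at the end of Section~\ref{coneInterval}: the first $(M + \tfrac{1}{\km}\ko{R})$-lattice points on the two rays of $\sigma\dual$ are $[c,-gc/\km]$ and $[-c', hc'/\km]$, whose $\iota$-images $(0,bc)$ and $(bc',0)$ are off-diagonal. Requiring them to lie outside the zone gives $k \leq c|I|$ and $k \leq c'|I|$.

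For sufficiency I plan to parametrize an off-diagonal point with $q > p$ as $(p,q) = \alpha(g,h) + \beta(\km,\km) + (1,1)$, $\alpha \geq 1$. For fixed $\alpha$ the smallest allowed $p \geq 0$ is $p_\alpha := (1 - \alpha\tilde G) \bmod \km \in [0,\km)$, where $\tilde G := (-g) \bmod \km$, so the minimal $q$ is $q_\alpha := p_\alpha + \alpha b$; absence of off-diagonal points on this side is exactly $q_\alpha \geq k\km$ for every $\alpha \geq 1$. The case $\alpha = c$ reproduces the threshold $k \leq c|I|$, the case $\alpha > c$ gives $q_\alpha \geq \alpha b > cb \geq k\km$ for free, and the only delicate range is $1 \leq \alpha < c$ (nonempty only when $\tilde G \geq 2$). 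There I will use the identity $\alpha\tilde G + p_\alpha = (j+1)\km + 1$ with $j := \lfloor\alpha\tilde G/\km\rfloor \geq 0$; writing $b = (s+t)\km + \tilde G + \tilde H$ with $s = \rounddown{\Ia}$, $t = \rounddown{\Ib}$, $\tilde H = H \bmod \km$ then yields
\[
q_\alpha \;=\; \alpha(s+t)\km + \alpha\tilde H + \bigl(\alpha\tilde G + p_\alpha\bigr) \;\geq\; (s+t+1)\km + 1 \;=\; (a_\ci - 1)\km + 1,
\]
which strictly exceeds $k\km$ because $k \leq a_\ci - 1$. A mirror-symmetric analysis on the side $p > q$ (swapping $g, \tilde G, c$ with $h, \tilde H, c'$) yields $k \leq c'|I|$, and combining the two conditions gives the claim.

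The main obstacle is the interior range $1 \leq \alpha < c$: the function $\alpha \mapsto q_\alpha$ is not minimized at $\alpha = c$ in general, and $q_1$ can be strictly smaller than $cb$ in explicit examples. What rescues the argument is precisely the algebraic identity $\alpha\tilde G + p_\alpha = (j+1)\km + 1$, which converts the range restriction $k \leq a_\ci - 1$ into the uniform interior bound $q_\alpha > (a_\ci - 1)\km$, so that only the extremal indices $\alpha = c$ and $\alpha = -c'$ end up controlling the condition.
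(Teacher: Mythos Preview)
Your argument is correct but takes a genuinely different route from the paper's. The paper first proves, by a convexity argument inside $\conv(\sigma^\vee\cap M\setminus 0)$, that the \emph{unshifted} set $M\cap Z_{k\ko{R}}$ already lies on the diagonal $\Q\ko{R}$ for every $k\leq a_\ci-1$; once this is known, any off-diagonal point of $(M+\tfrac{1}{\km}\ko{R})\cap Z_{k\ko{R}}$ is forced (by a one-step shift) onto the boundary $\partial\sigma^\vee$, and only the first lattice point on each shifted ray matters. You bypass that convexity lemma entirely: your parametrization by $\alpha$ and the residue identity $\alpha\tilde G+p_\alpha=(j+1)\km+1$ handle the interior range $1\leq\alpha<c$ directly, showing $q_\alpha>(a_\ci-1)\km$ there without ever invoking the unshifted statement. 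The paper's version is shorter and isolates a reusable geometric fact about $M\cap Z_{k\ko{R}}$; your version is a self-contained arithmetic computation that makes explicit why only the extremal indices $\alpha=c$ and $\alpha=-c'$ control the bound.

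One small point to make explicit when you write it up: the identity $\alpha\tilde G+p_\alpha=(j+1)\km+1$ relies on $\alpha\tilde G\bmod\km\geq 2$, which holds precisely because $1\leq\alpha<c$ forces $\alpha\tilde G\not\equiv 0,1\pmod\km$ (the residue $0$ would need $\km\mid\alpha$, and the residue $1$ would force $\alpha\equiv c$). You clearly have this in mind, but it should be stated.
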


\begin{proof}
By Corollary \ref{cor-shift}\,(2) or
Proposition \ref{prop-WqG}, a degree $k\ko{R}$ fails
to meet the \VW-property if and only if
$(M+\frac{1}{\km}\ko{R})\cap Z_{k\ko{R}}$ or,
equivalently,
$M\cap (Z_{k\ko{R}}-\frac{1}{\km}\ko{R})$
has points outside the diagonal
$\Q\cdot\ko{R}$.
\\[0.5ex]
First, we check that 
$M\cap Z_{k\ko{R}}\subseteq M\cap Z_{(a_\ci-1)\ko{R}}$ 
(i.e.\ without the translation) is always contained in the diagonal.
If not, then we could find $r^i,r^j\in E$ with, w.l.o.g., $i<\ci$ such that
$(a_\ci-1)r^\ci-(r^i+r^j)\in\sigma\dual$. This implies $\ci<j$,
and we choose an element $\sigc\in\innt\sigma$ such that 
$\langle\sigc, r^i\rangle = \langle\sigc, r^j\rangle \,(>0)$.
Since $r^1,\ldots,r^e$ run along the boundary of the convex polygon
$\conv(\sigma\dual\cap M\setminus 0)$, it follows that
$r^{\ci-1}, r^\ci, r^{\ci+1}\in\conv\{r^i,r^j\}$.
Thus
$$
\textstyle
\langle \sigc,\, \frac{r^{\ci-1}+r^{\ci+1}}{2}\rangle
\leq
\langle \sigc,\, r^i\rangle = \langle \sigc,\, r^j\rangle =
\langle \sigc,\, \frac{r^{i}+r^{j}}{2}\rangle,
$$
and we obtain a contradiction via
$$
\textstyle
\langle \sigc,\, r^{\ci-1}+r^{\ci+1}\rangle \leq
\langle \sigc,\, r^{i}+r^{j}\rangle \leq
\langle \sigc,\, (a_\ci-1)r^\ci\rangle <
\langle \sigc,\, a_\ci r^\ci\rangle =
\langle \sigc,\, r^{\ci-1}+r^{\ci+1}\rangle.
$$
Hence, $({M}\cap Z_{k\ko{R},\,-1})\setminus\Q\ko{R}\,$ is
non-empty if and only if 
$Z_{k\ko{R},\,-1}=Z_{k\ko{R}}-\frac{1}{\km}\ko{R}\,$ 
contains an ${M}$-lattice
point on the boundary $\partial\sigma\dual-\frac{1}{\km}\ko{R}\,$.
So we have to determine the smallest $\lambda\in\Q_{>0}$ such that 
$$
\textstyle
\lambda\cdot[\km,-g]-[0,\frac{1}{\km}]\in M
\hspace{1.0em}
\mbox{(and similarly with $[-\km,h]$).}
$$ 
This condition is equivalent to $\lambda\km\in\Z$ 
and $\lambda\cdot g + \frac{1}{\km}\in\Z$, i.e.\
$\km|(\lambda m\cdot g+1)$. By Proposition \ref{prop-abcInv},
this means $\lambda\km=c$. Hence, the first lattice point on the
shifted ray $\Q_{>0}\cdot r^1-\frac{1}{\km}\ko{R}\,$
is $[c,\,-\frac{gc+1}{\km}]$. Its value under $\sigb$ is
$$
\textstyle 
\langle\sigb,\,[c,\,-\frac{gc+1}{\km}]\rangle =
\langle (h,\km),\,[c,\,-\frac{gc+1}{\km}]\rangle =
c(h-g)-1.
$$
This leads to the condition $c(h-g)-1<k\km-1$ for
$Z_{k\ko{R},\,-1}$-membership. Thus, the \VW-condition 
coming from the ray $r^1$ is exactly the
opposite, namely $\,k\cdot\km\leq c\cdot(h-g)$. 
Similarly, the first lattice point on the
shifted ray $\Q_{>0}\cdot r^e-\frac{1}{\km}\ko{R}\,$
is $[-c',\,\frac{hc'-1}{\km}]$.
It leads to the inequality $\,k\cdot\km\leq c'\cdot(h-g)$.
\end{proof}

\subsection{Comparison of \qG- and \VW-deformations}
\label{compVWqG}
In \cite[Definition 3.7]{KSB}
the so-called T-singularities are defined as those cyclic quotient 
singularities that admit a $\Q$-Gorenstein one-parameter smoothing.
Their toric characterization can be found in
\cite[(7.3)]{minkDef} and \cite[(1.1)]{PResol}:
The toric variety $\toric(\sigma)$ is a T-singularity with Milnor number
$\mu$ if and only if $\sigma$ is the cone over a rational interval 
of integral length $\mu+1$ placed in height one.
\\[1ex]
Since an integral length does automatically imply the uniform denominator
property of (\ref{coneInterval}), this description of T-singularities 
can directly be compared to our Proposition~\ref{prop-finalQG}. 
Looking at $k=1$, it implies
that $S=\toric(C(I))$ allows a \qG-deformation at all
if and only if $|I|\geq 1$.
Altogether, we obtain the following chain of properties of 
an interval $I\not\cong[0,1]$ with uniform denominators:
\newcommand{\abst}{0.5em}
$$
(|I|=1) \hspace{\abst}\theen\hspace{\abst}
(|I|\in\Z_{\geq 1}) \hspace{\abst}\theen\hspace{\abst}
(|I|\geq 1)\hspace{\abst}\theen\hspace{\abst}
(\innt(I)\cap\Z\neq\emptyset)
$$
translating into
$$
(\mbox{T$_0$-singularity})\then
(\mbox{T-singularity})\then
(\exists \mbox{ \qG-deformation})\then
(\mbox{grounded CQS}).
$$

\subsection{The last deformation}
\label{lastDef}
Let $I=[\frac{g}{\km},\frac{h}{\km}]=[-\Ia,\Ib]$ 
\vspace{0.5ex}
be a grounded interval as in (\ref{scanIII}).
By Proposition \ref{prop-etaci}
we know that $|I|\geq a_\ci-2$ (with equality exactly for the
T-singularities).
Hence, Proposition~\ref{prop-finalQG} implies that
all subspaces $\ko{R}^\bot\subseteq \TT(-k\ko{R})$ with
$k=1,\ldots,a_\ci-2$ are \qG-deformations (hence \VW-deformations, too).
\\[0.5ex]
We will call the remaining deformation in degree $-(a_\ci-1)\cdot \ko{R}$
the ``{\em last deformation}''. This is the only degree where \qG- and 
\VW-deformations might differ at all. 
Note that the last deformation might also be the first one, i.e.\
$k=1$. This happens if and only if $a_\ci=2$, i.e.\ if and only if $\,0<\Ia,\Ib<1$.
\\[1ex]
For the following theorem, we will denote by $\{C\}:=C-\rounddown{C}$
the fractional part of a (positive, rational) number $C$.
Recall that $\Ia,\Ib\in\Q_{>0}$.

\begin{theorem}
\label{th-lastDef}
The one-dimensional subspaces $\ko{R}^\bot\subseteq \TT(-k\cdot\ko{R})$
for a grounded $S=\toric(C(I))$
with $k=1,\ldots,a_\ci-2=\rounddown{\Ia}+\rounddown{\Ib}$ are \qG- and
\VW-deformations. 
Moreover, the ``last'' deformation from
$\ko{R}^\bot$ in degree $-k\cdot\ko{R}$ with
$k=a_\ci-1$ is a
\qG- or \VW-deformation in the following cases:
\\[0.5ex]
{\rm 1)} 
The last deformation of $S=\toric(C(I))$
is \qG\ if and only if $\,\{\Ia\}+\{\Ib\}\geq 1$.
\\[0.5ex]
{\rm 2)} 
If $\{\Ia\}, \{\Ib\}\neq \frac{1}{\km}$, 
then the last deformation is \VW.
\\[0.5ex]
{\rm 3)} Otherwise,
i.e.\ if $\{\Ia\}=\frac{1}{\km}$ or $\{\Ib\}=\frac{1}{\km}$,
then the last deformation is \VW\ if and only if it is \qG. 
Hence, every \VW-deformation is \qG\ in this case.
\end{theorem}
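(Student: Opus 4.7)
The plan is to apply Propositions \ref{prop-finalQG} and \ref{prop-finalVW} directly at $k=a_\ci-1=1+\rounddown{\Ia}+\rounddown{\Ib}$, keeping the coordinates of (\ref{scanIII}): $\Ia=-g/\km$, $\Ib=h/\km$ with $g<0<h$ and $\gcd(g,\km)=\gcd(h,\km)=1$. The assertion that all $k=1,\dots,a_\ci-2$ give qG-deformations is Proposition \ref{prop-finalQG} together with the obvious inequality $\rounddown{\Ia}+\rounddown{\Ib}\leq\Ia+\Ib=|I|$, and part (1) is the same proposition applied at $k=a_\ci-1$: this value is $\leq|I|$ precisely when $\{\Ia\}+\{\Ib\}\geq 1$.

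The key numerical observation driving (2) and (3) is that the invariants $c,c'\in\{1,\dots,\km-1\}$ from Proposition \ref{prop-finalVW} satisfy $c=1\iff\{\Ia\}=1/\km$ (and analogously for $c',\{\Ib\}$): the congruence $c\cdot(-g)\equiv 1\pmod{\km}$ combined with $-g=\km\rounddown{\Ia}+\km\{\Ia\}$ forces $c=1$ iff $\km\{\Ia\}=1$. Proposition \ref{prop-finalVW} then says the last deformation is VW iff
\[
1+\rounddown{\Ia}+\rounddown{\Ib}\;\leq\; c\bigl(\rounddown{\Ia}+\rounddown{\Ib}+\{\Ia\}+\{\Ib\}\bigr)
\]
together with the symmetric inequality in $c'$. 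For part (2), assume $\{\Ia\},\{\Ib\}\neq 1/\km$, so $c,c'\geq 2$. If $\rounddown{\Ia}+\rounddown{\Ib}\geq 1$ the inequality is immediate. The remaining sub-case $\rounddown{\Ia}=\rounddown{\Ib}=0$ reduces to $c(h-g)\geq\km$; here $-g,h\geq 2$, and the congruence $c(-g)\equiv 1\pmod{\km}$ with both factors $\geq 2$ lifts the product to at least $\km+1$, giving $c(h-g)\geq ch+\km+1>\km$. The symmetric argument handles the $c'$-inequality.

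For part (3), assume WLOG $\{\Ia\}=1/\km$, so $c=1$; then the first VW inequality coincides word-for-word with the qG condition, so VW $\Rightarrow$ qG comes for free. Conversely, qG combined with $\{\Ia\}=1/\km$ forces $\{\Ib\}\geq(\km-1)/\km$. Since $\gcd(h,\km)=1$ excludes $\{\Ib\}=0$, we have $\{\Ib\}\in\{1/\km,\dots,(\km-1)/\km\}$, so $\{\Ib\}=(\km-1)/\km$. Then $h\equiv -1\pmod{\km}$ turns $c'\cdot h\equiv 1\pmod{\km}$ into $c'=\km-1$, and $|I|=\rounddown{\Ia}+\rounddown{\Ib}+1=a_\ci-1$ yields $c'|I|=(\km-1)(a_\ci-1)\geq a_\ci-1$ since $\km\geq 2$. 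This closes VW $\iff$ qG at the last deformation; combined with the first assertion of the theorem, every VW-deformation is qG in this case. The single delicate point is the boundary sub-case of part (2): one needs exactly the slack $c(-g)\geq\km+1$ (not merely $\geq 1$) to push $c(h-g)$ past $\km$, and this slack is precisely what evaporates at $\{\Ia\}=1/\km$ or $\{\Ib\}=1/\km$ --- the borderline phenomenon distinguishing (2) from (3).
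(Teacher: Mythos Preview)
Your proof is correct and follows the same strategy as the paper, reducing everything to Propositions~\ref{prop-finalQG} and~\ref{prop-finalVW} together with the equivalence $c=1\iff\{\Ia\}=\tfrac{1}{\km}$. For part~(3) the paper is more direct: once $c=1$, the standing fact $c'\geq 1$ immediately gives $\min\{c\,|I|,\,c'\,|I|\}=|I|$, so the VW and qG bounds literally coincide---your explicit computation of $\{\Ib\}$ and $c'$ is unnecessary.
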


\begin{proof}
(1) By 
Proposition \ref{prop-etaci} and \ref{prop-finalQG}, 
both sides are equivalent to $|I|\geq a_\ci-1$.
\\[1ex]
(3) The condition $\{\Ia\}=\frac{1}{\km}$ means $g\equiv -1$ (mod $\km$),
and since $c\cdot (-g)=1$ in $(\Z/\km\Z)^*$, this translates into $c=1$.
Similarly, $\{\Ib\}=\frac{1}{\km}$ is equivalent to $c'=1$.
Thus, the bounds in Proposition \ref{prop-finalQG} and \ref{prop-finalVW}
coincide.
\\[1ex]
(2)
We distinguish two cases. First, if $a_\ci\geq 3$,
then $|I|\geq 1$.
Hence
$$
a_\ci -1 \leq |I|+1 \leq \min\{c\cdot |I|, \;c'\cdot |I|\}
\;\mbox{ since } c,c'\geq 2.
$$
Otherwise, if $a_\ci=2$, then $c\geq 2$ together with 
$c\cdot (-g)\equiv 1\, (\km)$ implies that
$$
c\cdot (h-g)\geq c\cdot (-g)\geq \km+1 > \km,
$$
hence $\,c\cdot |I|>1=a_\ci-1$. Similarly we use 
$c'\cdot h \equiv 1\, (\km)$ to obtain
$\,c'\cdot |I|>a_\ci-1$.
\end{proof}

\subsection{Proof of Theorem \ref{V-VW.thm.1}}
\label{proofIntro}
We are going to proof Theorem \ref{V-VW.thm.1} of the introduction.
\\[1ex]
(1) Since $b=\gcd(n,\, q+1)$, the assumption implies $b=1$, hence
$a=\km=n$. Thus, $|I|=\frac{1}{\km}$,
and this does not leave space for $I=[\frac{g}{\km},\frac{h}{\km}]$
to become grounded, i.e.\ to allow an integer as an interior point of
$I$.
\\[1ex]
(2) Singularities admitting a \qG-smoothing are called T-singularities. In
(\ref{compVWqG}) we have seen that they correspond exactly to the
intervals of integral length, i.e.\ $\{\Ia\}+\{\Ib\}=1$. Now, the claim follows
directly from Theorem \ref{th-lastDef}\,(1).
\\[1ex]
(3) This follows because the \qG- and \VW-deformations can at most differ by
the ``last deformation''. This was just addressed in (\ref{lastDef}).
Alternatively, it follows directly from Theorem \ref{th-IAB}.

\subsection{An example of a \VW-deformation which is not \qG}
\label{exVWnotQG}
Let $\,I=[-\frac{2}{5},\frac{2}{5}]$, i.e.\ $\Ia=\Ib=\frac{2}{5}$.
\vspace{0.5ex}
This implies that $\{\Ia\}=\{\Ib\}=\frac{2}{5}$, 
i.e.\ by Theorem~\ref{th-lastDef}\,(1),
the last deformation is not \qG. Another way to see this 
is the criterion from (\ref{compVWqG}): Since $|I|<1$, there is no
\qG-deformation at all.
\\[0.5ex]
On the other hand, both $\{\Ia\}$ and $\{\Ib\}$ are different from 
$\frac{1}{5}$. Thus, Theorem \ref{th-lastDef}\,(2) implies that
the last deformation is \VW. 
Moreover, since there is no \qG-deformation at all, 
this has to be the ``first'' deformation $\ko{R}^\bot\subsetneq
\TT(-\ko{R})$ (i.e.\ with $k=1$) as well.
The other invariants are 
$$
\renewcommand{\abst}{0.7em}
n=20,\hspace{\abst}
q=11,\hspace{\abst} 
\km=a=5,\hspace{\abst}
b=4,\hspace{\abst}
\mbox{and }\; c=c'=3.
$$
The continued fraction $\frac{n}{n-q}=\frac{20}{9}$ yields
$[a_2,\ldots,a_6]=[3,2,2,2,3]$, i.e.\ $e=7$ and $\ko{R}=r^4$.
The associated $a_4=2$ was already known from our observation that the
``first'' equals the ``last'' deformation.
Finally, we obtain the following dimensions:
$$
\dim \TT=10 \mbox{ with }
\dim \TT(-k\cdot r^i)=\left\{\begin{array}{ll}
1 & \mbox{ if $k=1,2$ and $i=2,6$}\\
2 & \mbox{ if $k=1$ and $i=3,4,5$},
\end{array}\right.
$$
$\dim \TTV=3$ (degrees $-r^3,-r^4,-r^5$), and
$\dim \TTVW=1$ (in degree $-r^4$).

\subsection{Unobstructed \qG-families}
\label{unobQG}
While the focus of the paper is on the infinitesimal level,
we would just like to add how the first order \qG-deformations 
of $S=\toric(\sigma)$ extend to an unobstructed global family.
Assume that $I=[-\Ia,\Ib]$ is an interval with uniform denominators
giving rise to a cyclic quotient singularity $S_I$.
From Theorem \ref{th-IAB} we know that $d:=\dim \TTqG(S_I)$ equals
$\rounddown{\Ia+\Ib}$. This number vanishes unless $I$ is grounded.
In particular, we may write
$$
I = I' + d\cdot [0,1] 
$$
for some interval $I'$ (with uniform denominators) of length
$|I'|<1$.
In \cite[(3.2)]{flip}, such decompositions gave rise to so-called
homogeneous toric deformations of $S_I$ over the parameter space
$\A^d_k$. Its total space arises from the cone $\til{\sigma}$
taken over the 
{Cayley}-construction, i.e.\ from
$$
\textstyle
\til{\sigma}:=\Q_{\geq 0} \cdot \big(I',\,e^0\big) +
\sum_{j=1}^d \Q_{\geq 0} \cdot \big([0,1],\,e^j\big)\subseteq 
\Q\times\Q^{d+1}
$$
where $\{e^j\kst j=0,\ldots,d\}$ denotes the canonical basis of $\Q^{d+1}$.
As it is $S_I=\toric(\sigma)$, also $\toric(\til{\sigma})$ is $\Q$-Gorenstein.
The (non-toric) flat map $\toric(\til{\sigma})\to\A^d_k$ arises
from the toric map
$\toric(\til{\sigma})\to\A_k^{d+1}$ assigned to the projection
$\Z\times\Z^{d+1}\surj \Z^{d+1}$ composed with the linear projection
$\A_k^{d+1}\surj\A_k^{d+1}/k\cdot (1,1,\ldots,1)\cong\A_k^d$.
\\[2ex]
{\bf Acknowledgment:} We would like to thank F.-O.~Schreyer
for many fruitful discussions and initiating the contact on this topic.
Thanks to Jan Stevens for finding mistakes in the originally submitted
arXiv version and to the anonymous referee for valuable suggestions.

\bibliographystyle{alpha}
\bibliography{qG}

\begin{thebibliography}{{Mum}78}

\bibitem[{Alt}95]{minkDef}
Klaus {Altmann}.
\newblock {Minkowski sums and homogeneous deformations of toric varieties.}
\newblock {\em {Tohoku Math. J. (2)}}, {\bf 47}(2):\;151--184, 1995.

\bibitem[{Alt}98]{PResol}
Klaus {Altmann}.
\newblock {P-resolutions of cyclic quotients from the toric viewpoint.}
\newblock In {\em {Singularities. The Brieskorn anniversary volume. Proceedings
  of the conference dedicated to Egbert Brieskorn on his 60th birthday,
  Oberwolfach, Germany, July 1996}}, pages \;241--250. Basel: Birkh\"auser,
  1998.

\bibitem[{Alt}00]{flip}
Klaus {Altmann}.
\newblock {One parameter families containing three dimensional toric Gorenstein
  singularities.}
\newblock In {\em {Explicit birational geometry of 3-folds}}, pages \;21--50.
  Cambridge: Cambridge University Press, 2000.

\bibitem[Amb03]{ambro}
Florin Ambro.
\newblock Quasi-log varieties.
\newblock {\em Tr.~Mat.~Inst.~Steklova}, {\bf 240}:\;220--239, 2003.

\bibitem[{Bri}68]{Brieskorn}
Egbert {Brieskorn}.
\newblock {Rationale Singularit\"aten komplexer Fl\"achen.}
\newblock {\em {Invent. Math.}}, {\bf 4}:\;336--358, 1968.

\bibitem[CLS11]{CoxLittleSchenck}
David~A. {Cox}, John~B. {Little}, and Henry~K. {Schenck}.
\newblock {\em {Toric varieties.}}
\newblock Providence, RI: American Mathematical Society (AMS), 2011.

\bibitem[{Fuj}09]{fujinobook}
Osamu {Fujino}.
\newblock {Introduction to the log minimal model program for log canonical
  pairs.}
\newblock {arXiv: 0907.1506}, 2009.

\bibitem[{Har}66]{HartRD}
Robin {Hartshorne}.
\newblock {Residues and duality. Appendix: Cohomologie \`a support propre et
  construction du foncteur $f\sp!$. par P. Deligne.}
\newblock {Lecture Notes in Mathematics. 20. Berlin-Heidelberg-New York:
  Springer-Verlag, 423 p. (1966).}, 1966.

\bibitem[HK10]{hac-kov}
Christopher~D. {Hacon} and S\'andor {Kov\'acs}.
\newblock {\em {Classification of higher dimensional algebraic varieties.}}
\newblock Basel: Birkh\"auser, 2010.

\bibitem[HP10]{MR2581246}
Paul Hacking and Yuri Prokhorov.
\newblock Smoothable del {P}ezzo surfaces with quotient singularities.
\newblock {\em Compos. Math.}, 146(1):169--192, 2010.

\bibitem[{Kaw}07]{kawakita}
Masayuki {Kawakita}.
\newblock {Inversion of adjunction on log canonicity.}
\newblock {\em {Invent. Math.}}, {\bf 167}(1):\;129--133, 2007.

\bibitem[{Kol}92]{k-etal}
J\'anos {Koll\'ar}, editor.
\newblock {\em {Flips and abundance for algebraic threefolds. A summer seminar
  at the University of Utah, Salt Lake City, 1991.}}
\newblock Paris: Soci\'et\'e Math\'ematique de France, 1992.

\bibitem[{Kol}95]{k-flat}
J\'anos {Koll\'ar}.
\newblock {Flatness criteria.}
\newblock {\em {J. Algebra}}, {\bf 175}(2):\;715--727, 1995.

\bibitem[{Kol}08]{k-hh}
J{\'a}nos {Koll{\'a}r}.
\newblock {Hulls and husks}.
\newblock {arXiv: 0805.0576}, 2008.

\bibitem[{Kol}13a]{k-modbook}
J{\'a}nos {Koll{\'a}r}.
\newblock {Moduli of varieties of general type.}
\newblock In {\em {Handbook of moduli. Volume II}}, pages \;131--157.
  Somerville, MA: International Press; Beijing: Higher Education Press, 2013.

\bibitem[{Kol}13b]{kk-singbook}
J\'anos {Koll\'ar}.
\newblock {\em {Singularities of the minimal model program. With the
  collaboration of S\'andor Kov\'acs.}}
\newblock Cambridge: Cambridge University Press, 2013.

\bibitem[KS88]{KSB}
J\'anos {Koll\'ar} and Nicholas {Shepherd-Barron}.
\newblock {Threefolds and deformations of surface singularities.}
\newblock {\em {Invent. Math.}}, {\bf 91}(2):\;299--338, 1988.

\bibitem[{Mum}78]{MR541025}
David {Mumford}.
\newblock {Some footnotes to the work of C. P. Ramanujam.}
\newblock {C.P. Ramanujam. - A tribute. Collect. Publ. of C.P. Ramanujam and
  Pap. in his Mem., Tata Inst. fundam. Res., Stud. Math. {\bf 8}, 247-262
  (1978).}, 1978.

\bibitem[Pin74]{pin-def}
Henry~C. Pinkham.
\newblock {\em Deformations of algebraic varieties with {$G\sb{m}$} action}.
\newblock Soci\'et\'e Math\'ematique de France, Paris, 1974.
\newblock Ast{\'e}risque, No. 20.

\bibitem[{Pin}77]{pinkham_CQS}
Henry~C.\ {Pinkham}.
\newblock {Deformations of quotient surface singularities.}
\newblock {Several complex Variables, Proc. Symp. Pure Math. {\bf 30}, Part 1,
  Williamstown 1975, 65-67}, 1977.

\bibitem[{Rie}74]{riems209}
Oswald {Riemenschneider}.
\newblock {Deformationen von Quotientensingularit\"aten (nach zyklischen
  Gruppen).}
\newblock {\em {Math. Ann.}}, {\bf 209}:\;211--248, 1974.

\bibitem[{Ste}03]{janS}
Jan {Stevens}.
\newblock {\em {Deformations of singularities.}}
\newblock Berlin: Springer, 2003.

\bibitem[{Vie}95]{vieh-book}
Eckart {Viehweg}.
\newblock {\em {Quasi-projective moduli for polarized manifolds.}}
\newblock Berlin: Springer-Verlag, 1995.

\bibitem[{Wah}80]{MR82d:14004}
Jonathan~M.\ {Wahl}.
\newblock {Elliptic deformations of minimally elliptic singularities.}
\newblock {\em {Math. Ann.}}, {\bf 253}:\;241--262, 1980.

\bibitem[{Wah}81]{MR83h:14029}
Jonathan~M.\ {Wahl}.
\newblock {Smoothings of normal surface singularities.}
\newblock {\em {Topology}}, {\bf 20}:\;219--246, 1981.

\end{thebibliography}

\end{document}